\theoremstyle{plain}
\newtheorem{theorem}{Theorem}
\newtheorem{proposition}[theorem]{Proposition}
\newtheorem{lemma}[theorem]{Lemma}
\theoremstyle{definition}
\newtheorem{remark}[theorem]{Remark}
\newtheorem*{remark*}{Remark}
\newtheorem*{finalremarks}{Final remarks}
\newtheorem{corollary}[theorem]{Corollary}
\newcommand{\N}{\mathbb{N}}
\newcommand{\R}{\mathbb{R}}
\begin{document}

\title{A rate of metastability for the Halpern type Proximal Point Algorithm}
\author{Pedro Pinto${}^{a}$\\[2mm]
	\footnotesize ${}^{a}$ Department of Mathematics, Technische Universit{\"a}t Darmstadt,\\ 
	\footnotesize Schlossgartenstrasse 7, 64289 Darmstadt, Germany \\ [2mm]
	\footnotesize E-mail:  \protect\url{pinto@mathematik.tu-darmstadt.de}
}
\date{}
\maketitle

\begin{abstract}
Using proof-theoretical techniques, we analyze a proof by H.-K. Xu regarding a result of strong convergence for the Halpern type proximal point algorithm. We obtain a rate of metastability (in the sense of T. Tao) and also a rate of asymptotic regularity for the iteration. Furthermore, our final quantitative result bypasses the need of the sequential weak compactness argument present in the original proof. This elimination is reflected in the extraction of primitive recursive quantitative information. This work follows from recent results in Proof Mining regarding the removal of sequential weak compactness arguments.\\

\noindent {\em Keywords:} Proximal point algorithm; Sequential weak compactness; Proof mining; Metastability; Rates of asymptotic regularity.\\

\noindent  {\it Mathematics Subject Classification 2010}: 47H05; 47H09; 47J25; 03F10.

\end{abstract}

\section{Introduction}
Let $X$ be a real Hilbert space and $\mathsf{A}:X\to 2^{X}$ be a maximal monotone operator on $X$. We denote by $S$ the zero set of $\mathsf{A}$. One of the major problems in the theory of maximal operators is how to find a point  in $S$. The relevance of this search for zeros stems from the fact that many problems in nonlinear analysis and optimization theory can be formulated as a question of finding a zero for specific maximal monotone operators. An important tool in finding such zeros of maximal monotone operators is Rockafellar's proximal point algorithm: given an initial guess $x_0$, a sequence of positive real numbers $(\beta_n)$ and accepting a sequence of possible errors $(e_n)$, the algorithm is defined recursively by $x_{n+1}:=J_{\beta_n}(x_n)+e_n$.
Rockafellar showed in \cite{rockafellar1976monotone} that, provided $(\beta_n)$ is bounded away from zero and $(\|e_n\|)$ is a summable sequence, the proximal point algorithm must converge weakly to a zero point. In \cite{guler1991convergence}, G{\"u}ler argued that the algorithm may fail to be strongly convergent already in infinite-dimensional Hilbert spaces. This gave rise to several modifications to the proximal point algorithm in an attempt to try ensure a strong convergence result. Motivated by the success of Halpern iterations for nonexpansive mappings in fixed point theory (see e.g. \cite{halpern1967fixed,wittmann1992approximation,bauschke1996approximation}), Kamimura and Takahashi in \cite{kamimura2000approximating} and independently Xu in \cite{xu2002iterative} introduced the dubbed Halpern type proximal point algorithm: Consider $(\alpha_n)\subset\, ]0,1[\,$, $(\beta_n)\subset \R^+$ sequences of real numbers, $x_0\in X$ an initial guess and $(e_n)\subset X$ an error sequence. Then, the Halpern type proximal point algorithm is recursively defined by the convex linear combination
\begin{equation}\label{hppa}\tag{HPPA}
x_{n+1}:=\alpha_nx_0+(1-\alpha_n)(J_{\beta_n}(x_n) + e_n).
\end{equation}

In \cite{xu2002iterative} Xu showed that, under some conditions on the parameters, the algorithm \eqref{hppa} strongly converges to a zero point, as follows.
\begin{theorem}[Xu {\cite[Theorem 5.1]{xu2002iterative}}]\label{original_xu}
	Let $(x_n)$ be generated by \eqref{hppa} and assume that the following conditions hold
	\begin{enumerate}[\rm(C$1$)]
		\item\label{c1} $\lim \alpha_n = 0$
		\item\label{c2} $\sum \alpha_n =\infty$
		\item\label{c3} $\lim \beta_n =\infty$
		\item\label{c4} $\sum \|e_n\|< \infty$.
	\end{enumerate}
	Then $(x_n)$ strongly converges to a zero of $\textsf{A}$, the closest one to $x_0$.
\end{theorem}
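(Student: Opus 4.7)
The plan is to follow the now-classical template for strong convergence of Halpern-type iterations. Let $p := P_S x_0$ denote the metric projection of $x_0$ onto the nonempty closed convex set $S$. First I would establish that $(x_n)$ is bounded: since $p$ is a fixed point of every $J_{\beta_n}$, nonexpansiveness of the resolvent yields
\[
\norm{x_{n+1} - p} \leq \alpha_n \norm{x_0 - p} + (1-\alpha_n)\bigl(\norm{x_n - p} + \norm{e_n}\bigr),
\]
and an induction exploiting \textup{(C\ref{c4})} gives a uniform bound in terms of $\norm{x_0 - p}$ and $\sum \norm{e_n}$.

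The next block is asymptotic regularity. Combining the recursion with the standard resolvent identity
\[
J_{\beta_n}(x) = J_{\beta_{n-1}}\!\left( \tfrac{\beta_{n-1}}{\beta_n}\, x + \bigl(1 - \tfrac{\beta_{n-1}}{\beta_n}\bigr) J_{\beta_n}(x) \right),
\]
one dominates $\norm{x_{n+1} - x_n}$ by a contractive piece $(1-\alpha_n)\norm{x_n - x_{n-1}}$ plus a remainder controllable by \textup{(C\ref{c1})}, \textup{(C\ref{c3})} and \textup{(C\ref{c4})}; a standard real-sequence lemma then yields $\norm{x_{n+1} - x_n} \to 0$. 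Feeding this back into the iteration (and using the resolvent identity once more to shift from $\beta_n$ to a fixed $\beta$) gives $\norm{x_n - J_\beta(x_n)} \to 0$ for every $\beta > 0$.

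The crucial and most delicate step is the variational inequality
\[
\limsup_n \langle x_0 - p,\, x_n - p \rangle \leq 0.
\]
Here I would pick a subsequence $(x_{n_k})$ attaining the $\limsup$, extract via sequential weak compactness a further subsequence weakly converging to some $q$, apply demiclosedness of $I - J_\beta$ together with the previous step to conclude $q \in S$, and then invoke the characterization $\langle x_0 - p, q - p\rangle \leq 0$ of $p = P_S x_0$.

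Finally, expanding $\norm{x_{n+1} - p}^2$ through $\norm{a+b}^2 \leq \norm{a}^2 + 2\langle b, a+b\rangle$ and using nonexpansiveness of $J_{\beta_n}$ produces a recursive estimate of the form
\[
\norm{x_{n+1} - p}^2 \leq (1-\alpha_n)\norm{x_n - p}^2 + \alpha_n \sigma_n + \delta_n,
\]
with $\limsup_n \sigma_n \leq 0$ and $\sum \delta_n < \infty$; Xu's classical lemma on real sequences then forces $\norm{x_n - p} \to 0$. The main obstacle is precisely the variational inequality step: it rests on sequential weak compactness and the demiclosedness principle, and it is exactly this ingredient that the abstract announces will later be bypassed so as to permit the extraction of primitive recursive quantitative information.
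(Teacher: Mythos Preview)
Your overall architecture matches the paper's sketch of Xu's proof: boundedness via induction, the $\limsup$ inequality via sequential weak compactness together with demiclosedness, and the final step via Xu's real-sequence Lemma~\ref{xu_seq_reals_1}. You also correctly single out the weak-compactness step as the one the paper will later bypass.

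The one genuine gap is your route to asymptotic regularity. You propose to first prove $\norm{x_{n+1}-x_n}\to 0$ by dominating it with $(1-\alpha_n)\norm{x_n-x_{n-1}}$ plus a remainder ``controllable by (C\ref{c1}), (C\ref{c3}), (C\ref{c4})''. But if you actually write out that estimate using the resolvent identity relating $J_{\beta_n}$ to $J_{\beta_{n-1}}$, the remainder contains terms of the form $|\alpha_n-\alpha_{n-1}|$ and $\bigl|1-\beta_{n-1}/\beta_n\bigr|$ (times bounded quantities). Hypotheses (C\ref{c1})--(C\ref{c4}) say nothing about these: $\alpha_n\to 0$ does not force $|\alpha_n-\alpha_{n-1}|\to 0$ or $\sum|\alpha_n-\alpha_{n-1}|<\infty$, and $\beta_n\to\infty$ does not force $\beta_{n-1}/\beta_n\to 1$. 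So the ``standard real-sequence lemma'' you have in mind cannot be fed, and this step stalls. (Such extra conditions are indeed assumed in many Halpern-type results, but not in Theorem~\ref{original_xu}.)

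The route actually taken avoids this entirely. One first observes directly from the recursion that
\[
\norm{x_{n+1}-J_{\beta_n}(x_n)}\le \alpha_n\norm{x_0-J_{\beta_n}(x_n)}+\norm{e_n}\to 0
\]
using only (C\ref{c1}), (C\ref{c4}) and boundedness (this is Lemma~\ref{asymp_reg0}). Then the resolvent identity is applied with a \emph{fixed} second parameter $\gamma$, not with $\beta_{n-1}$, giving
\[
\norm{x_{n+1}-J_\gamma(x_{n+1})}\le (2{\cal D}+{\cal E})\alpha_n+\tfrac{\gamma}{\beta_n}\norm{x_n-x_{n+1}}+\Bigl|1-\tfrac{\gamma}{\beta_n}\Bigr|\norm{J_{\beta_n}(x_n)-x_{n+1}}+\norm{e_n},
\]
and every term on the right tends to $0$ using only (C\ref{c1}), (C\ref{c3}), (C\ref{c4}) and the mere \emph{boundedness} of $\norm{x_n-x_{n+1}}$ (Proposition~\ref{asymp_reg}). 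No consecutive-step regularity on $(\alpha_n)$ or $(\beta_n)$ is needed, and $\norm{x_{n+1}-x_n}\to 0$ is never used as an input.
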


The main goal of this paper is to analyze Xu's original proof and obtain a quantitative version of Theorem~\ref{original_xu}. Techniques from proof theory have been successfully employed in the analysis of mathematical proofs as a way to obtain stronger results: either by relaxing the assumptions necessary for the proof or by exhibiting new computational information previously hidden in the main arguments of the original proof. This research program has been called Proof Mining and was greatly developed by Ulrich Kohlenbach and his collaborators for the last twenty five years with applications to results in many areas of mathematics (for a comprehensive reference see \cite{kohlenbach2008applied} and for a more recent overview see \cite{kohlenbach2017recent}\cite{kohlenbach2018kreisel}). In this quantitative analysis, we will extract a bound on the metastability of the sequence $(x_n)$ in the sense of Terence Tao \cite{tao2008structure}\cite{tao2008norm}, i.e. extract a functional $\Phi: \N \times \N^{\N}\to \N$ such that
\begin{equation}\label{goal}
\forall k\in \N \forall f\in \N^{\N} \exists n\leq \Phi(k,f)\, \forall i, j\in [n, f(n)]\, \left( \|x_i-x_j\|\leq \frac{1}{k+1}\right).
\end{equation}

Notice that the Cauchy property for the sequence $(x_n)$ can be obtained (ineffectively) from \eqref{goal}. While in general one cannot obtain information on the rate of the Cauchy property, the extraction of the functional $\Phi$ is guaranteed by the theoretical aspects of the proof interpretation underlying the analysis -- and is achieved in Theorems~\ref{main1} and \ref{main2} bellow. Here we will be guided by the bounded functional interpretation \cite{ferreira2005bounded,ferreira2009injecting}. This quantitative analysis was carried out in the context of the author's PhD thesis~\cite{pinto2019proof} and continues the application of this functional interpretation to concrete cases of Proof Mining \cite{ferreira2019removal,dinis2019metastability}. Nevertheless, as is usual with proof mining results, the proof-theoretical techniques are only employed as an intermediate step and no particular logical knowledge is required to understand the main results in this paper. This analysis also follows several quantitative studies into the proximal point algorithm and variants thereof (\cite{leucstean2018abstract,leustean2018effective,leucstean2018application,kohlenbach2018quantitative,kohlenbach2019quantitative,dinis2019metastability}).\\
In the next section, we briefly sketch Xu's original proof and identify the most delicate steps of the analysis. We look at the analysis of the main arguments used in the proof and give a quantitative form to a lemma by Xu \cite[lemma 2.5]{xu2002iterative} which is particularly useful for our analysis. In the following section, we proceed with the extraction of a rate of asymptotic regularity and a bound on the metastability of \eqref{hppa}.

\section{Preliminaries}

Let $X$ be a real Hilbert space and consider a multi-valued operator $\textsf{A}:X\to 2^X$ to be maximal monotone, i.e. $\textsf{A}$ satisfies the monotonicity property $ \langle x-x', y-y'\rangle \geq 0$, for all $x$, $x'\in X$, $y\in \textsf{A}(x)$, $y'\in \textsf{A}(x')$,
and its graph cannot be extended while keeping this property. We denote by $S$ the set of zeros of $\textsf{A}$, $$S:=\{x\in X\, :\, 0\in \textsf{A}(x)\}.$$ It is well-known that the set $S$ is closed and convex, and we will henceforth assume it to be nonempty. For any positive real number $\beta$, the resolvent function $J_{\beta}$ defined by $J_{\beta}:=(Id+\beta \textsf{A})^{-1}$ is a single-valued nonexpansive mapping on $X$, i.e. ${\|J_{\beta}(x)-J_{\beta}(y)\|\leq \|x-y\|}$, for all ${x,y\in X}$. Furthermore, it is easily seen that the set of fixed points of a resolvent function coincides with the zero set $S$. For a comprehensive reference on maximal monotone operators in Hilbert spaces, we refer to \cite{bauschke2017convex}.\\

The structure of Xu's proof of theorem \ref{original_xu} is as follows:
\begin{enumerate}[ Step 1.]
	\item \underline{$(x_n)$ is bounded:} This initial fact is easily seen using an inductive reasoning.
	\item \underline{Projection onto $S$:} Xu considers $P_S(x_0)$, the metric projection point of $x_0$ onto $S$, which is well defined since $S$ is a nonempty, closed and convex set. The usefulness of the point $P_S(x_0)$ in the proof lies in the following crucial variational inequality: $\forall y\in S\, \left( \langle x_0-P_S(x_0), y-P_S(x_0)\rangle \leq 0\right)$.
	\item \underline{$\limsup \,\langle x_0-P_S(x_0), x_n-P_S(x_0)\rangle \leq 0$:} This is argued using a weak sequential compactness argument in conjunction with the demiclosedness principle.
	\item \underline{$x_n\to P_S(x_0)$:} The last step of the proof centers on the application of the following lemma:
	\begin{lemma}[Xu {\cite[Lemma 2.5]{xu2002iterative}}]\label{xu_seq_reals_1}
		Let $(s_n)$ be a sequence of nonnegative real numbers and assume that for any $n\in \N$
		\begin{equation}\label{xu_lem_main_ass}
		s_{n+1}\leq (1-\alpha_n)s_n+\alpha_nr_n + \gamma_n,
		\end{equation}
		\noindent with $(\alpha_n)\subset\, ]0,1[$, $(r_n)$ and $(\gamma_n)\subset \R^+_0$ are sequences of real numbers such that: \rm(C{\ref{c2}}) holds, $\limsup r_n\leq 0$ and $\sum \gamma_n <\infty$. Then $\lim s_n =0$.
		\end{lemma}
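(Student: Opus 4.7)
My plan is to prove the lemma by the classical iterate-and-expand technique for affine recursions of this form. First, given $\varepsilon > 0$, I would use the two asymptotic conditions on $(r_n)$ and $(\gamma_n)$ to fix an index $N_0$ such that both $r_n \le \varepsilon$ for all $n \ge N_0$ (from $\limsup r_n \le 0$) and $\sum_{n \ge N_0} \gamma_n \le \varepsilon$ (from $\sum \gamma_n < \infty$).

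Next, I would iterate \eqref{xu_lem_main_ass} starting at $N_0$ to obtain, for every $n \ge N_0$,
$$ s_{n+1} \le s_{N_0}\prod_{i=N_0}^{n}(1-\alpha_i) + \sum_{i=N_0}^{n}\alpha_i r_i\prod_{j=i+1}^{n}(1-\alpha_j) + \sum_{i=N_0}^{n}\gamma_i\prod_{j=i+1}^{n}(1-\alpha_j). $$
I would then estimate the three summands separately. The first vanishes as $n\to\infty$: the elementary inequality $1-x\le e^{-x}$ together with condition (C\ref{c2}) forces $\prod_{i=N_0}^{n}(1-\alpha_i)\to 0$. The middle summand is at most $\varepsilon$, by invoking $r_i\le\varepsilon$ and the telescoping identity $\sum_{i=N_0}^{n}\alpha_i\prod_{j=i+1}^{n}(1-\alpha_j)=1-\prod_{i=N_0}^{n}(1-\alpha_i)\le 1$. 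The last summand is dominated by $\sum_{i=N_0}^{n}\gamma_i\le\varepsilon$ since each weight $\prod_{j=i+1}^{n}(1-\alpha_j)\le 1$.

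Combining these bounds gives $\limsup_n s_n \le 2\varepsilon$. Because $\varepsilon$ was arbitrary and $(s_n)$ is nonnegative, the conclusion $\lim s_n = 0$ follows.

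I do not anticipate a genuine obstacle: each step is an elementary manipulation. The only points requiring care are the telescoping identity for the weighted geometric sum (checked by a short induction on $n$) and the implication $\sum\alpha_i=\infty\Rightarrow\prod(1-\alpha_i)\to 0$, which is precisely where the quantitative content of (C\ref{c2}) enters. With a view to the proof-mining analysis of later sections, one should note that each of the three estimates above can be made explicit in terms of a rate of divergence of $\sum\alpha_n$, a rate of convergence for $\limsup r_n\le 0$, and a Cauchy rate for the partial sums of $(\gamma_n)$; this will be the starting point for the quantitative form of Xu's lemma announced in the introduction.
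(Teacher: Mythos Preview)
Your proposal is correct. The paper does not supply its own proof of this lemma (it is simply cited from Xu), but your iterate-and-expand argument is precisely the technique the paper uses when establishing the quantitative refinements in Lemmas~\ref{final_lem_1} and~\ref{final_lem_2}: expand the recursion by induction, kill the leading product via $1-x\le e^{-x}$ together with (C\ref{c2}), and bound the remaining two sums by the choice of the starting index---so your closing remark about extracting explicit rates lines up exactly with what the paper goes on to do.
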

\end{enumerate}

	Step 1. and Step 4. pose no problem for the analysis. Regarding Step 2. and Step 3., we will see that it is possible to replace the projection argument with a weaker statement, to completely bypass the need of weak compactness and in the end still obtain a metastability result for the sequence $(x_n)$. These changes to the principles needed in the proof have a clear impact in simplifying the extracted information (for logicians: namely by ensuring that we only need primitive recursive functionals in the sense of G\"odel and avoiding the need of functionals defined by bar recursion).\\

It will be useful to recall the notion of monotone functional for two particular cases. We rely on the strong majorizability relation introduced by Bezem in \cite{bezem1985strongly}. First, given functions $f,g:\N\to\N$, we say that $f$ majorizes $g$, writing $g\leq^* f$, when
\begin{equation*}
\forall n\in \N\, \forall m\leq n\, \left( g(m)\leq f(n) \land f(m)\leq f(n)\right).
\end{equation*}
Then, $f$ is said to be monotone if $f\leq^*f$ which can easily be seen to correspond to the usual notion of being increasing: $\forall n\in\N\, \left( f(n)\leq f(n+1)\right)$. Also notice that for any function $f$, we have $f\leq^* f^{\rm maj}$, where $f^{\rm maj}$ is the monotone function defined by $f^{\rm maj}(n):=\max\{f(m) : m\leq n\}$. A functional $\psi:\N\times \N^{\N}\to \N$ is said to be monotone if for any $n, m\in\N$ and $f, g:\N \to \N$,
\begin{equation*}
\left(m\leq n \land g\leq^* f\right) \to \psi(m,g)\leq \psi(n,f).
\end{equation*}

In order to give a quantitative version of Theorem~\ref{original_xu}, we must give quantitative meaning to the conditions \rm(C{\ref{c1}})-\rm(C{\ref{c4}}):

\begin{enumerate}[{\rm (Q$1$)}]
	\item\label{q1} ${\rm a}:\N\to \N$ is a rate of convergence towards zero for $(\alpha_n)$, i.e. $\displaystyle{\forall k\in\N\,\forall n\geq {\rm a}(k) \, \left(\alpha_n\leq \frac{1}{k+1}\right)}$;
	\item\label{q2} ${\rm A}:\N\to \N$ is a rate of divergence for $(\sum \alpha_n)$, i.e. $\displaystyle{\forall k\in\N \,\left(\sum_{i=0}^{{\rm A}(k)}\alpha_i\geq k\right)}$;
	\item\label{q3} ${\rm B}:\N\to \N$ is a rate of divergence for $(\beta_n)$, i.e. $\displaystyle{\forall k\in \N\, \forall n\geq {\rm B}(k)\, \left( \beta_n \geq k\right)}$;
	\item\label{q4} ${\rm E}:\N\to\N$ is a \emph{Cauchy rate} for $\left(\sum_{i=0}^{n} \|e_i\|\right)$, i.e., $\displaystyle{\forall k\in \N\,\forall n\in \N\, \left( \sum_{i={\rm E}(k)+1}^{{\rm E}(k)+n} \|e_i\|\leq \frac{1}{k+1}\right)}$;
\end{enumerate}

Without loss of generality, we can assume all the functions above to be monotone. In fact, the conditions will still hold true if the functions were replaced by their $(\cdot)^{\rm maj}$-counterpart.

\subsection{Projection and weak compactness}

Quantitative analyses of the projection argument have been carried out before, e.g. in \cite{kohlenbach2011quantitative}\cite{kohlenbach2010logical} (guided by the monotone functional interpretation) and more recently, in a different way in \cite{ferreira2019removal}, using the bounded functional interpretation. Consider $C$ a closed and convex subset of $X$, $x_0$ a point in $C$, $T:C\to C$ a nonexpansive mapping and $F$ the fixed-point set of $T$, i.e. $F:=\{x\in C\, :\, T(x)=x\}$. In \cite{kohlenbach2011quantitative}, Kohlenbach remarked that instead of the original projection statement
\begin{equation}\label{proj0}
\exists x\in F\, \forall k\in \N \, \forall y\in F \, \left( \|x-x_0\|\leq \|y-x_0\|+\frac{1}{k+1} \right),
\end{equation}

the following weaker version is already sufficient for quantitative analyses
\begin{equation}\label{proj}
\forall k\in \N \, \exists x\in F\, \forall y\in F \, \left( \|x-x_0\|\leq \|y-x_0\|+\frac{1}{k+1} \right).
\end{equation}
\indent The relevance of this point is in the fact the weaker version can be shown using simply an inductive argument (while the original statement requires a strong form of choice), which translates to simpler quantitative information.

While the analysis of \eqref{proj} done in \cite{ferreira2019removal} consider the set $C$ to be bounded, here we don't have that condition and instead know $F$ to be nonempty. Nevertheless, with this hypothesis, we can recover a `boundedness' condition by restricting the projection onto fixed points inside a closed ball with a certain radius.\\
\indent Let $p$ be a zero of $\textsf{A}$. For any $N\in \N$, let $B_{N}:=\left\{x\in X\, |\, \|x-p\|\leq N\,\right\}$ denote the closed ball centered at $p$ with radius $N$. Looking again at \eqref{proj0}, it is clear that if $N\geq 2\|x_0-p\|$, then one can equivalently replace $F$ with $F\cap B_N$. Now, weakening this restriction, we arrive at the following statement
\begin{equation}\label{proj1}
\forall k\in \N \, \exists x\in F\cap B_{N}\, \forall y\in F\cap B_{N} \, \left( \|x-x_0\|\leq \|y-x_0\|+\frac{1}{k+1} \right).
\end{equation}

We have the following quantitative version of \eqref{proj1}.
\begin{proposition}\label{q_proj1}
Let $N\in \N\setminus \{0\}$ be such that $N\geq 2\|x_0-p\|$ for some $p\in S$.\\
For any natural number $k$ and monotone function $f:\N \to \N $, there are $n \leq f^{(r)}(0)$ and $x\in C\cap B_{N}$ such that
$$\|T(x)-x\| \leq \frac{1}{f(n)+1}$$
and
$$\forall y \in C\cap B_{N}\, 
\left(\|T(y)-y \|\leq \frac{1}{n+1} \to \|x-x_0\|^2 \leq \|y-x_0\|^2 + \frac{1}{k+1}\right),$$
where $r:=r(N,k):=N^2(k+1)$ and $f^{(r)}$ is the $r$-th fold composition of $f$.
\end{proposition}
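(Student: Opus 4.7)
The plan is to build, by a finite iteration of length at most $r = N^2(k+1)$, a sequence $x_{(0)}, x_{(1)}, \ldots$ of approximate fixed points in $C \cap B_N$ whose squared distances to $x_0$ strictly decrease by at least $\frac{1}{k+1}$ at each step. Since $\|p - x_0\|^2 \leq N^2/4$, such a sequence cannot continue for $r$ steps, so at some stage the quasi-minimality condition in the statement must hold, giving the desired witness.

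The delicate point is scheduling the precisions $n_i$ so that the replacement rule preserves the invariant $\|T(x_{(i)}) - x_{(i)}\| \leq \frac{1}{f(n_i)+1}$. The trick is to index \emph{backwards}: set $n_i := f^{(r-i)}(0)$ for $i = 0, 1, \ldots, r$, so that $n_0 = f^{(r)}(0)$, $n_r = 0$, and $f(n_{i+1}) = n_i$. The iteration begins with $x_{(0)} := p$, for which $\|T(x_{(0)}) - x_{(0)}\| = 0 \leq \frac{1}{f(n_0)+1}$ trivially, and $p \in C \cap B_N$. At stage $i$, we ask whether the pair $(n_i, x_{(i)})$ satisfies the conclusion: if yes, we halt with output $n := n_i$ and $x := x_{(i)}$ (note that $n_i \leq f^{(r)}(0)$ since $f^{(j)}(0)$ is monotone increasing in $j$, as $f$ is monotone and $f(0) \geq 0$); if no, there is a witness $y \in C \cap B_N$ with $\|T(y) - y\| \leq \frac{1}{n_i + 1}$ and $\|x_{(i)} - x_0\|^2 > \|y - x_0\|^2 + \frac{1}{k+1}$. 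Setting $x_{(i+1)} := y$, the identity $f(n_{i+1}) = n_i$ gives exactly $\|T(x_{(i+1)}) - x_{(i+1)}\| \leq \frac{1}{f(n_{i+1})+1}$, so the invariant holds at the next stage.

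Termination: if no halting occurred at stages $0, \ldots, r-1$, we would accumulate $r$ strict decreases and obtain $\|x_{(r)} - x_0\|^2 < \|p - x_0\|^2 - \frac{r}{k+1} \leq \frac{N^2}{4} - N^2 < 0$, absurd. Hence some stage $i^\ast \leq r$ halts, yielding the desired pair with $n_{i^\ast} \leq f^{(r)}(0)$. The main obstacle is exactly the choice of the schedule $n_i = f^{(r-i)}(0)$: a naive forward iteration $n_{i+1} := f(n_i)$ would demand a witness $y$ of precision finer than $\frac{1}{f(n_{i+1})+1}$, which is not what the failure of the output condition at stage $i$ supplies. Indexing backwards so that the ``challenge resolution'' $f(n_{i+1})$ at the next stage coincides with the current relaxation $n_i$ is what makes the two precisions line up.
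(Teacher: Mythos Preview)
Your proof is correct and follows essentially the same approach as the paper. The paper does not spell out the argument but refers to \cite[Proposition~3.1]{ferreira2019removal}, noting only that one should start the iteration at the fixed point $p$ and use the bound $\|x_0-p\|\leq N/2$ in place of a diameter bound; your write-up carries out exactly this iterative construction with the backward schedule $n_i=f^{(r-i)}(0)$ and the termination count $r=N^2(k+1)$, which is precisely the argument from that reference adapted as the paper indicates.
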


\begin{proof}
	It is essentially the proof of \cite[proposition 3.1]{ferreira2019removal} with two easy observations: first, see that we can take the point $p\in S$ for the initial element of the sequence defined there; second, in place of a bound on the diameter of the set $C$ -- in our case $C\cap B_N$ -- one can instead work with a bound on the norm $\|x_0-p\|$.
\end{proof}

\begin{remark}\hfill
	\begin{enumerate} 
	\item It is possible to work without the point $p$ and instead only with the assumption that on some ball there are arbitrarily good almost fixed points -- which is an easy fact (see Lemma~\ref{seq_bounded}, Proposition~\ref{asymp_reg} and also Lemma~\ref{same_fix_set_ab}).
	\item One may question the need to restrict the projection argument to a bounded set. However, this innocuous additional step justifies the logical form in the previous proposition (as was explained in \cite{ferreira2019removal}). The simplicity of this result contrast with the stronger one in \cite[lemma 2.4]{kohlenbach2011quantitative} and yet it suffices to our quantitative analysis.
	\end{enumerate}
\end{remark}

The following two results are essentially due to Kohlenbach \cite{kohlenbach2011quantitative}. For any $u,v\in X$ and  $t\in [0,1]$, consider the convex linear combination $q_t(u,v):=(1-t)u+tv$. 

\begin{lemma}\label{convex_F}
	For all $N\in\N\setminus \{0\}$, $k\in \N$ and $x_1,x_2\in C\cap B_N$,
	\begin{equation*}
	\bigwedge_{j=1}^2 \|T(x_j)-x_j\|\leq \frac{1}{24N(k+1)^2}\to \forall t\in [0,1]\, \left(\|T(q_t(x_1,x_2))-q_t(x_1,x_2)\|\leq \frac{1}{k+1}\right).
	\end{equation*}
\end{lemma}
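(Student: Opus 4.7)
The plan is to combine the nonexpansiveness of $T$ with the Hilbert-space identity
$$\|(1-t)u+tv\|^2=(1-t)\|u\|^2+t\|v\|^2-t(1-t)\|u-v\|^2.$$
Setting $q:=q_t(x_1,x_2)$, $\varepsilon:=\frac{1}{24N(k+1)^2}$ and $D:=\|x_1-x_2\|$, I would first write $T(q)-q=(1-t)(T(q)-x_1)+t(T(q)-x_2)$ and apply the identity above with $u:=T(q)-x_1$ and $v:=T(q)-x_2$ (so $u-v=x_2-x_1$), obtaining
$$\|T(q)-q\|^2=(1-t)\|T(q)-x_1\|^2+t\|T(q)-x_2\|^2-t(1-t)D^2.$$

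Next, nonexpansiveness of $T$ together with the triangle inequality yields
$$\|T(q)-x_j\|\leq\|T(q)-T(x_j)\|+\|T(x_j)-x_j\|\leq\|q-x_j\|+\varepsilon,\qquad j=1,2.$$
Since $q-x_1=t(x_2-x_1)$ and $q-x_2=(1-t)(x_1-x_2)$, this gives $\|T(q)-x_1\|\leq tD+\varepsilon$ and $\|T(q)-x_2\|\leq(1-t)D+\varepsilon$. Substituting these bounds back into the identity and expanding the squares, the $t(1-t)D^2$ contributions cancel (since $(1-t)t^2+t(1-t)^2-t(1-t)=0$), leaving
$$\|T(q)-q\|^2\leq 4t(1-t)D\varepsilon+\varepsilon^2\leq D\varepsilon+\varepsilon^2.$$

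Finally, since $x_1,x_2\in B_N$, the triangle inequality through the centre $p$ gives $D\leq 2N$, so $\|T(q)-q\|^2\leq 2N\varepsilon+\varepsilon^2$. With the chosen $\varepsilon$ we have $2N\varepsilon=\frac{1}{12(k+1)^2}$, while the quadratic term $\varepsilon^2$ is easily absorbed using $N\geq 1$, which yields $\|T(q)-q\|^2\leq\frac{1}{(k+1)^2}$, as desired. There is no real obstacle here; the one thing to watch is the clean cancellation of the $t(1-t)D^2$ terms when the bounds on $\|T(q)-x_j\|^2$ are substituted, and the constant $24$ appearing in the hypothesis is calibrated precisely so that both the linear $2N\varepsilon$ and the residual $\varepsilon^2$ fit within $(k+1)^{-2}$.
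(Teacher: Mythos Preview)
Your argument is correct. The paper does not give its own proof of this lemma but simply records that it is \cite[Lemma~2.3]{kohlenbach2011quantitative} after the substitutions $d\mapsto 2N$, $\varepsilon\mapsto\frac{1}{k+1}$; what you have written is precisely the standard Hilbert-space computation underlying that lemma, so your approach coincides with the referenced proof.
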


\begin{lemma}\label{relation_to_innerp}
	For all $N\in\N\setminus \{0\}$, $k\in \N$ and $x, y\in C\cap B_N$,
	\begin{equation*}
	\forall t\in [0,1]\, \left( \|x-x_0\|^2\leq \|q_t(x,y)-x_0\|^2+\frac{1}{4N^2(k+1)^2}\right) \to \langle x_0-x, y-x\rangle \leq \frac{1}{k+1}.
	\end{equation*}
\end{lemma}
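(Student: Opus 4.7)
The plan is to expand the Hilbert space norm of $q_t(x,y)-x_0$ so that the hypothesis translates into a one-parameter inequality bounding $\langle x_0-x,y-x\rangle$, and then to pick a suitably small value of $t\in[0,1]$ to make this bound equal $\frac{1}{k+1}$.

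First I would write $q_t(x,y)-x_0 = (x-x_0) + t(y-x)$ and expand
\begin{equation*}
\|q_t(x,y)-x_0\|^2 = \|x-x_0\|^2 + 2t\langle x-x_0,y-x\rangle + t^2\|y-x\|^2.
\end{equation*}
Plugging this into the hypothesis and cancelling the term $\|x-x_0\|^2$ on both sides, I obtain for every $t\in[0,1]$
\begin{equation*}
2t\langle x_0-x,y-x\rangle \leq t^2\|y-x\|^2 + \frac{1}{4N^2(k+1)^2}.
\end{equation*}
Because $x,y\in B_N$ (the ball of radius $N$ around the fixed point $p$), the triangle inequality gives $\|y-x\|\leq 2N$, hence $\|y-x\|^2\leq 4N^2$. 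Dividing by $2t$ (for $t>0$) yields
\begin{equation*}
\langle x_0-x,y-x\rangle \leq 2tN^2 + \frac{1}{8tN^2(k+1)^2}.
\end{equation*}

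Now I would choose $t:=\frac{1}{4N^2(k+1)}$, which lies in $(0,1]$ since $N\geq 1$ and $k\geq 0$. A direct calculation shows that both summands on the right-hand side then equal $\frac{1}{2(k+1)}$, giving exactly the desired bound $\frac{1}{k+1}$. (This choice of $t$ is in fact the one that minimises the right-hand side, obtained from AM--GM applied to the two terms.) There is no real obstacle here: the only subtle point is to ensure that the chosen $t$ belongs to $[0,1]$, which follows immediately from the assumption $N\in\N\setminus\{0\}$.
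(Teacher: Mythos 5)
Your proof is correct, and it takes a genuinely different route from the paper. Where you give a self-contained computation, the paper simply observes that Lemma~\ref{relation_to_innerp} is an instance of Lemma~2.7 in Kohlenbach's 2011 paper (with the substitution $d=2N\geq\|x-y\|$, $\varepsilon=\frac{1}{k+1}$). Your derivation is essentially the proof of that cited lemma unwound in this special case: expand $\|q_t(x,y)-x_0\|^2$, cancel, divide by $2t$, and optimise the resulting bound $2N^2t + \frac{1}{8N^2t(k+1)^2}$ over $t$. The AM--GM minimiser $t=\frac{1}{4N^2(k+1)}$ indeed lies in $(0,1]$ since $N\geq 1$, and both terms evaluate to $\frac{1}{2(k+1)}$, giving exactly $\frac{1}{k+1}$. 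What your approach buys is transparency and the confirmation that the constant $\frac{1}{4N^2(k+1)^2}$ in the hypothesis is tight for this choice of diameter bound $2N$; what the paper's citation buys is brevity and a clean bookkeeping of which standard lemmas from the proof-mining literature are being reused. Either is acceptable; yours would actually make the paper more self-contained.
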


Lemma~\ref{convex_F} corresponds to \cite[lemma 2.3]{kohlenbach2011quantitative} with $d$, $\frac{\varepsilon^2}{16d}$ and $\varepsilon$ replaced by $2N$,  $\frac{1}{12(2N)(k+1)^2}$ and $\frac{1}{k+1}$, respectively. For Lemma~\ref{relation_to_innerp} we are using \cite[lemma 2.7]{kohlenbach2011quantitative} with $d=2N\geq \|x-y\|$ and with $\frac{1}{(2N)^2(k+1)^2}$ and $\frac{1}{k+1}$ replacing $\frac{\varepsilon^2}{2d^2}$ and $\varepsilon$, respectively. As in \cite[proposition 3.2]{ferreira2019removal}, now using Lemma~\ref{convex_F} and Lemma~\ref{relation_to_innerp}, we derive the following quantitative result.

\begin{proposition}\label{q_proj2}
	Let $N\in \N\setminus \{0\}$ be such that $N\geq 2\|x_0-p\|$ for some $p\in S$.\\
	For any natural number $k$ and monotone function $f:\N \to \N $, there are $n \leq 24N(w_{f,N}^{(R)}(0)+1)^2$ and $x\in C\cap B_N$ such that
	$$\|T(x)-x \| \leq \frac{1}{f(n)+1} \, \land \, \forall y\in C\cap B_N 
	\left(\|T(y)-y\|\leq \frac{1}{N+1} \to \langle x_0-x,y-x\rangle \leq \frac{1}{k+1}\right),$$ 
	with $R:=R(N,k):=4N^4(k+1)^2$ and $w_{f,N}(m):=\max\{ f(24N(m+1)^2),\, 24N(m+1)^2 \}$.
\end{proposition}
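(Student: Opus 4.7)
The plan is to reduce to Proposition~\ref{q_proj1} applied with carefully chosen parameters, and then to upgrade the resulting ``approximate minimizer'' inequality to the stated inner product bound by exploiting convexity via Lemma~\ref{convex_F} and Lemma~\ref{relation_to_innerp}, in the same spirit as \cite[proposition 3.2]{ferreira2019removal}.

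Concretely, I would take the auxiliary monotone function $\tilde f(m) := w_{f,N}(m) = \max\{f(24N(m+1)^2),\, 24N(m+1)^2\}$ and apply Proposition~\ref{q_proj1} with $\tilde f$ in place of $f$ and with $k_0 := 4N^2(k+1)^2 - 1$ in place of $k$. Since $r(N,k_0) = N^2(k_0+1) = 4N^4(k+1)^2 = R$, this produces some $n_0 \leq \tilde f^{(R)}(0) = w_{f,N}^{(R)}(0)$ and $x\in C\cap B_N$ with $\|T(x)-x\|\leq \frac{1}{\tilde f(n_0)+1}$ together with
\[
\forall y\in C\cap B_N\ \left(\|T(y)-y\|\leq \tfrac{1}{n_0+1}\ \to\ \|x-x_0\|^2\leq \|y-x_0\|^2+\tfrac{1}{4N^2(k+1)^2}\right).
\]
Setting $n := 24N(n_0+1)^2$ yields the stated bound $n\leq 24N(w_{f,N}^{(R)}(0)+1)^2$, and the shape of $\tilde f$ provides simultaneously $\|T(x)-x\|\leq \frac{1}{f(n)+1}$ and $\|T(x)-x\|\leq \frac{1}{24N(n_0+1)^2}$.

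To finish, I would fix any $y\in C\cap B_N$ satisfying the antecedent of the conclusion. For each $t\in[0,1]$, the convex combination $q_t(x,y)$ lies in $C\cap B_N$, and the hypothesis on $\|T(y)-y\|$ is calibrated so that, together with the bound on $\|T(x)-x\|$, Lemma~\ref{convex_F} applied at parameter $n_0$ gives $\|T(q_t(x,y))-q_t(x,y)\|\leq \frac{1}{n_0+1}$ uniformly in $t$. Substituting each such $q_t(x,y)$ into the projection-type implication above then yields $\|x-x_0\|^2\leq \|q_t(x,y)-x_0\|^2+\frac{1}{4N^2(k+1)^2}$ for every $t\in[0,1]$, and a final application of Lemma~\ref{relation_to_innerp} converts this family of inequalities into $\langle x_0-x,y-x\rangle\leq \frac{1}{k+1}$.

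The main obstacle is purely bookkeeping, consisting in lining up three pairs of thresholds simultaneously: the error $\frac{1}{k_0+1}$ produced by Proposition~\ref{q_proj1} must match the $\frac{1}{4N^2(k+1)^2}$ precondition of Lemma~\ref{relation_to_innerp} (which forces $k_0 = 4N^2(k+1)^2-1$, and hence the appearance of $R = N^2(k_0+1) = 4N^4(k+1)^2$); the $\tilde f$-rate for $\|T(x)-x\|$ must be strong enough both to deliver the desired $\frac{1}{f(n)+1}$-bound and to pass the Lemma~\ref{convex_F} test at level $n_0$ (which forces the $w_{f,N}$ shape of $\tilde f$); and the antecedent on $y$ must meet the same Lemma~\ref{convex_F} test, which is what pins down the form of the hypothesis appearing in the conclusion. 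Once these three alignments are fixed, no further estimation is required and the proof is just the chain of substitutions above.
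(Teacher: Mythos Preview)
Your approach is correct and coincides with the paper's: apply Proposition~\ref{q_proj1} with $w_{f,N}$ at accuracy $4N^2(k+1)^2-1$, set $n:=24N(n_0+1)^2$, invoke Lemma~\ref{convex_F} at level $n_0$ to push the projection inequality from $y$ to all $q_t(x,y)$, and conclude via Lemma~\ref{relation_to_innerp}. The only difference is cosmetic: the paper first isolates an intermediate statement with conclusion $\|x-x_0\|^2\leq\|q_t(x,y)-x_0\|^2+\tfrac{1}{k+1}$ and then specializes $k$, whereas you fix the target $k$ from the outset.

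One point to flag explicitly. The printed antecedent $\|T(y)-y\|\leq\frac{1}{N+1}$ is a typo for $\frac{1}{n+1}$ (compare Corollary~\ref{proj_J}, which restates the result with $\frac{1}{n+1}$). Your Lemma~\ref{convex_F} step requires $\|T(y)-y\|\leq\frac{1}{24N(n_0+1)^2}=\frac{1}{n}$, which the weaker bound $\frac{1}{N+1}$ does not supply, since $n\geq 24N>N$. So your argument (like the paper's own) establishes the intended and subsequently used version of the proposition, not the literal one; you may want to note this rather than leave the phrase ``the hypothesis on $\|T(y)-y\|$ is calibrated'' doing the work.
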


	In the remainder of this quantitative analysis, the set $C$ will be the entire Hilbert space $X$ and we will fix a particular resolvent function for the nonexpansive map $T$ (any will do since the corresponding set of fixed points is always the set $S$). We choose to work with the nonexpansive function $J:=J_1:=(Id+\textsf{A})^{-1}$ -- it is possible to work with $J_{\gamma}$ for an arbitrary real number $\gamma >0$ if we additionally consider a natural number $n$ that is an upper bound on the value of $\gamma$. The relevant result takes then the following form which contains the crucial content of the projection argument used in the proof of Theorem~\ref{original_xu}.

\begin{corollary}\label{proj_J}
	Let $\textsf{A}:X\to 2^X$ a maximal monotone operator on a Hilbert space $X$. Assume that the set $S$ of zeros of $\textsf{A}$ is nonempty and consider the resolvent function $J:=(Id+\textsf{A})^{-1}$. Take $x_0\in X$ and $N\in \N\setminus \{0\}$ a natural number satisfying $N\geq 2\|x_0-p\|$ for some point $p\in S$.
	For any $k\in \N$ and monotone function $f:\N \to \N $, there are $n \leq 24N(w_{f,N}^{(R)}(0)+1)^2$ and $x\in B_N$ such that
	$$\|J(x)-x \| \leq \frac{1}{f(n)+1} \, \land \, \forall y\in B_N 
	\left(\|J(y)-y\|\leq \frac{1}{n+1} \to \langle x_0-x,y-x\rangle \leq \frac{1}{k+1}\right),$$ 
	with $R:=R(N,k):=4N^4(k+1)^2$ and $w_{f,N}:=\max\{ f(24N(m+1)^2),\, 24N(m+1)^2 \}$.
\end{corollary}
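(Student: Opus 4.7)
The plan is to derive Corollary~\ref{proj_J} as a direct specialization of Proposition~\ref{q_proj2}, using the specific choices $C := X$ and $T := J = (\mathrm{Id} + \textsf{A})^{-1}$. Since $X$ itself is trivially a closed and convex subset of the Hilbert space, the requirement on $C$ in the previous proposition is satisfied. The preliminaries already record the two facts needed about $J$: it is single-valued and nonexpansive on all of $X$, and its set of fixed points equals the zero set $S$ of $\textsf{A}$. Hence, when we instantiate $F$ of Proposition~\ref{q_proj2} at $T=J$ we obtain $F=S$, and the hypothesis asserting the existence of $p\in S$ with $N\geq 2\|x_0-p\|$ is precisely the hypothesis $N\geq 2\|x_0-p\|$ for some $p\in F$ required there.

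Next, I would simply invoke Proposition~\ref{q_proj2} with the above identifications. The conclusion transports verbatim: we obtain a natural number $n\leq 24N(w_{f,N}^{(R)}(0)+1)^2$ with $R = 4N^4(k+1)^2$ and $w_{f,N}(m)=\max\{f(24N(m+1)^2),\, 24N(m+1)^2\}$, together with a point $x\in B_N$ (the intersection $C\cap B_N = X\cap B_N$ collapses to $B_N$) such that
\[
\|J(x)-x\|\leq \frac{1}{f(n)+1}
\]
and, for every $y\in B_N$ with $\|J(y)-y\|\leq \frac{1}{n+1}$, the inner-product bound $\langle x_0-x, y-x\rangle \leq \frac{1}{k+1}$ holds. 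These are exactly the statements of the corollary.

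Since the corollary is essentially a repackaging, I do not expect any genuine obstacle. The only thing worth double-checking is bookkeeping: that the bounds $24N(w_{f,N}^{(R)}(0)+1)^2$ and $R = 4N^4(k+1)^2$ coincide with those in Proposition~\ref{q_proj2}, and that the definition of $w_{f,N}$ used in the corollary matches (the corollary statement writes $w_{f,N}:=\max\{f(24N(m+1)^2),\,24N(m+1)^2\}$, which is the same function of $m$ as in the proposition). Once these are confirmed, the proof is complete by a single application of Proposition~\ref{q_proj2}.
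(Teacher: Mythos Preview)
Your proposal is correct and matches the paper's approach exactly: the paper explicitly states, just before Corollary~\ref{proj_J}, that one takes $C=X$ and $T=J=J_1$, and then records the specialization of Proposition~\ref{q_proj2} as the corollary without further argument. Your bookkeeping checks (that $C\cap B_N=B_N$, that $F=S$, and that the bounds and the definition of $w_{f,N}$ agree) are precisely what is needed.
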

	
	In the original proof (step 3), the sequential weak compactness argument is used to show
	\begin{equation}\label{swc0}
	\forall k\in \N \,\exists n\in\N \,\forall i\geq n\, \left(\langle x_0-P_S(x_0), x_i-P_S(x_0)\rangle\leq \frac{1}{k+1}\right),
	\end{equation}
	where $P_S(x_0)$ is the projection point of $x_0$ onto $S$. In fact, a weaker statement already suffices for the quantitative analysis, which is in line with the fact that we don't have the point $P_S(x_0)$ but only approximations to it in the sense of \eqref{proj1}:
	\begin{equation}\label{swc}
	\forall k\in \N \,\exists n\in\N\, \exists x\in B_N \,\forall i\geq n\, \left(\langle x_0-x, x_i-x\rangle\leq \frac{1}{k+1}\right).
	\end{equation}
	
	In the next section, we will see (cf. \eqref{radius_of_ball} in Lemma~\ref{seq_bounded}) that, if $p\in S$ and {\rm E} satisfies {\rm (Q\ref{q4})}, then for all $n\in \N$
	$$\|x_n-p\|\leq \|x_0-p\|+1+ \sum_{i=0}^{{\rm E}(0)}\|e_i\|,$$
	and in Proposition~\ref{asymp_reg} we will obtain a (monotone) function $\chi_1:\N\to\N$ that is a rate of asymptotic regularity in the following sense:
	\begin{equation}\label{ass_reg}
	\forall k\in \N \,\forall n\geq\chi_{1}(k)\, \left(\|J(x_n)-x_n\|\leq\frac{1}{k+1}\right).
	\end{equation}
	
	With these two facts, we can now give a quantitative form to \eqref{swc}.
	\begin{proposition}\label{q_swc}
		Let $\textsf{A}:X\to 2^X$ a maximal monotone operator on a Hilbert space $X$. Assume that the set $S$ of zeros of $\textsf{A}$ is nonempty and consider the resolvent function $J:=(Id+\textsf{A})^{-1}$. Take $x_0\in X$ and $N\in \N$  a natural number satisfying and $N\geq \max\{2\|x_0-p\|, \|x_0-p\|+1+ \sum_{i=0}^{{\rm E}(0)}\|e_i\|\}$ for some point $p\in S$.\\
		For any $k\in \N$ and monotone function $f:\N \to \N $, there are $n \leq \psi_{N,\chi_1}(k,f)$ and $x\in B_N$ such that
		$$\|J(x)-x \| \leq \frac{1}{f(n)+1} \, \land \, \forall i\geq n 
		\left(\langle x_0-x,x_i-x\rangle \leq \frac{1}{k+1}\right),$$ 
		with $\psi_{N,\chi_1}(k,f):=\chi_1(24N(w_{\hat{f},N}^{(R)}(0)+1)^2)$, where $R:=R(N,k):=4N^4(k+1)^2$ and $\hat{f}(m):=f(\chi_1(m))$.
	\end{proposition}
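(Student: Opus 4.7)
The plan is to derive Proposition~\ref{q_swc} by combining the quantitative projection statement of Corollary~\ref{proj_J} with the rate of asymptotic regularity $\chi_1$ from \eqref{ass_reg}, together with the boundedness of the iteration. The guiding idea is that the universal statement in Corollary~\ref{proj_J} only delivers information about approximate fixed points $y\in B_N$ of $J$; the rate $\chi_1$ lets us substitute each $x_i$ (for $i$ beyond a suitable threshold) as such an approximate fixed point, provided we can ensure $x_i\in B_N$. The definition $\hat{f}:=f\circ \chi_1$ is precisely the reparametrisation that makes the two levels of precision line up.

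Concretely, I would first invoke the forward-referenced Lemma~\ref{seq_bounded}: the hypothesis $N\geq \|x_0-p\|+1+\sum_{i=0}^{{\rm E}(0)}\|e_i\|$ implies $\|x_i-p\|\leq N$ for every $i\in\N$, so $x_i\in B_N$ throughout. Next, since $f$ and $\chi_1$ are both monotone, so is $\hat{f}$, and I would apply Corollary~\ref{proj_J} with $\hat{f}$ in place of $f$ and the same $k$. This yields $n_0\leq 24N(w_{\hat{f},N}^{(R)}(0)+1)^2$ and some $x\in B_N$ with
\[
\|J(x)-x\|\leq\frac{1}{\hat{f}(n_0)+1}\quad\text{and}\quad \forall y\in B_N\,\left(\|J(y)-y\|\leq \tfrac{1}{n_0+1}\to \langle x_0-x,y-x\rangle\leq \tfrac{1}{k+1}\right).
\]

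Setting $n:=\chi_1(n_0)$, the rate \eqref{ass_reg} yields $\|J(x_i)-x_i\|\leq 1/(n_0+1)$ for all $i\geq n$; combined with $x_i\in B_N$, the universal statement instantiated at $y=x_i$ gives $\langle x_0-x,x_i-x\rangle\leq 1/(k+1)$ for every $i\geq n$. The first clause $\|J(x)-x\|\leq 1/(f(n)+1)$ is immediate from the identity $\hat{f}(n_0)=f(\chi_1(n_0))=f(n)$. Monotonicity of $\chi_1$ finally gives $n=\chi_1(n_0)\leq \chi_1(24N(w_{\hat{f},N}^{(R)}(0)+1)^2)=\psi_{N,\chi_1}(k,f)$, as required.

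The only real subtlety is the bookkeeping in choosing $\hat{f}$: one must anticipate that the witness $n_0$ delivered by the corollary is \emph{not} the witness for Proposition~\ref{q_swc} itself (it is $n=\chi_1(n_0)$), and therefore the corollary must be called at precision $\hat{f}(n_0)=f(n)$ rather than $f(n_0)$. Once this layered reparametrisation is in place, no additional analytic input is needed; in particular, no sequential weak compactness is invoked, which is exactly the elimination promised in the introduction.
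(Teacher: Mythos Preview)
Your proposal is correct and follows essentially the same approach as the paper's own proof: apply Corollary~\ref{proj_J} with the reparametrised function $\hat{f}=f\circ\chi_1$, set $n:=\chi_1(n_0)$, and then feed the iterates $x_i$ (which lie in $B_N$ by Lemma~\ref{seq_bounded}) into the universal clause via the asymptotic-regularity rate \eqref{ass_reg}. Your explicit remarks on the monotonicity of $\hat{f}$ and on why the reparametrisation is needed are helpful elaborations, but the argument is otherwise identical to the paper's.
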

	
	\begin{proof}
		Let $k\in\N$ and a monotone function $f:\N\to\N$ be given. Notice that $N\geq 2\|x_0-p\|$ for some $p\in S$ and thus, applying Corollary~\ref{proj_J} to the natural number $k$ and to the monotone function $\hat{f}$, we obtain $n_0\leq 24N(w_{\hat{f},N}^{(R)}(0)+1)^2$ and $x\in B_N$ such that
		\begin{equation*}
		\|J(x)-x \| \leq \frac{1}{\hat{f}(n_0)+1} \, \land \, \forall y\in B_N 
		\left(\|J(y)-y\|\leq \frac{1}{n_0+1} \to \langle x_0-x,y-x\rangle \leq \frac{1}{k+1}\right).
		\end{equation*}
		With $n:=\chi_1(n_0)$ -- which by monotonicity is $\leq \psi_{N,\chi_1}(k,f)$ -- we have
		\begin{equation*}
		\|J(x)-x \| \leq \frac{1}{f(n)+1}.
		\end{equation*}
		By \eqref{ass_reg}, for $i\geq n$, we have $\|J(x_i)-x_i\|\leq \frac{1}{n_0+1}$ and, since $(x_n)\subset B_N$, we conclude
		\begin{equation*}
		\langle x_0-x,x_i-x\rangle \leq \frac{1}{k+1}.\qedhere
		\end{equation*}
	\end{proof}

\begin{remark}
	Notice that in the proof above, no instance of sequential weak compactness was used. The theoretical reason for the removal of this principle is fully explained in \cite{ferreira2019removal}. In fact, this elimination can be seen to correspond to an application of \cite[proposition 4.3]{ferreira2019removal} with the parameter function $\varphi$ defined by $\varphi(x,y):=\langle x-x_0, y\rangle$.
\end{remark}

\subsection{Lemmas}

We start with a well-known identity pertaining to resolvent functions.

\begin{lemma}[Resolvent identity]\label{resolvent_identity}
	For all $x\in X$ and $a$, $b \in\R^+$, the following identity holds
	\begin{equation*}
	J_a(x)=J_{b}\left(\frac{b}{a}x+\left(1-\frac{b}{a}\right)J_{a}(x)\right).
	\end{equation*}
\end{lemma}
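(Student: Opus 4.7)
The plan is to unwind the definition of the resolvent in terms of the operator $\textsf{A}$ and then check the claimed identity by a direct algebraic computation. Recall that $J_a = (Id + a\textsf{A})^{-1}$ is single-valued, so writing $u := J_a(x)$ is the same as saying $x \in u + a\textsf{A}(u)$, i.e.\ the element
\[
v := \frac{x - u}{a}
\]
lies in $\textsf{A}(u)$.

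Next, I would show that $u$ is also the value of $J_b$ at the point
\[
y := \frac{b}{a}x + \left(1 - \frac{b}{a}\right) u.
\]
Since $J_b$ is single-valued, it suffices to verify that $y \in u + b\,\textsf{A}(u)$, or equivalently that $(y-u)/b$ coincides with the element $v \in \textsf{A}(u)$ identified above. This is a one-line computation:
\[
\frac{y - u}{b} \;=\; \frac{1}{b}\left(\frac{b}{a} x + \left(1 - \frac{b}{a}\right) u - u\right) \;=\; \frac{1}{b}\cdot \frac{b}{a}(x - u) \;=\; \frac{x - u}{a} \;=\; v \;\in\; \textsf{A}(u).
\]
Hence $y \in (Id + b\textsf{A})(u)$, so $u = J_b(y)$, which is exactly the stated identity.

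There is no real obstacle here — the only subtlety is keeping straight that the resolvent, being the inverse of the (possibly multi-valued) operator $Id + a\textsf{A}$, is characterised by the relation $u = J_a(x) \Longleftrightarrow (x-u)/a \in \textsf{A}(u)$, and then noticing that the convex combination defining $y$ is designed precisely so that $(y-u)/b$ equals this same element of $\textsf{A}(u)$. No monotonicity or maximality of $\textsf{A}$ is needed for the identity itself; single-valuedness of $J_a$ and $J_b$ (guaranteed by maximal monotonicity, as already noted in the Preliminaries) is what makes the above equality of points, rather than of sets, meaningful.
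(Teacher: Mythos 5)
Your proof is correct, and it is the standard argument for the resolvent identity. The paper itself does not supply a proof of Lemma~\ref{resolvent_identity} — it is quoted as a well-known fact — so there is nothing to diverge from. Your derivation is exactly the canonical one: unwind $u = J_a(x)$ to $\tfrac{x-u}{a} \in \textsf{A}(u)$, observe that the convex combination $y = \tfrac{b}{a}x + (1-\tfrac{b}{a})u$ is chosen so that $\tfrac{y-u}{b}$ picks out the same element of $\textsf{A}(u)$, and conclude $u = J_b(y)$ by single-valuedness of $J_b$. Your closing remark is also well taken: the algebra uses nothing about monotonicity; maximal monotonicity is only what guarantees the resolvents are single-valued everywhere, making the pointwise identity meaningful.
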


Lemma \ref{xu_seq_reals_1} above contains the main combinatorial part of the proof of that we wish to analyze and thus, we will need to give it a quantitative version. The particular case of this lemma when $\gamma_n \equiv 0$, was already given a quantitative version by Kohlenbach and Leu{\c s}tean in \cite{kohlenbach2012effective}. Recently with similar arguments, in \cite{leustean2020quantitative} two quantitative versions of Lemma~\ref{xu_seq_reals_1} were shown without that restriction, which we now state.

\begin{lemma}[\cite{leustean2020quantitative}]\label{xu_seq_reals_qt1}
	Let $(s_n)$ be a bounded sequence of non-negative real numbers and $D\in\N$ a positive upper bound on $(s_n)$. Consider sequences of real numbers $(\alpha_n)\subset\,]0,1[$, $(r_n)$ and $(\gamma_n)\subset \R^+_0$ and assume the existence of monotone functions ${\rm A}$, ${\rm R}$, ${\rm G}:\N \to \N$ such that
	\begin{enumerate}[{\rm (i)}]
		\item ${\rm A}$ satisfies {\rm (Q\ref{q2})},
		\item ${\rm R}$ is such that $\forall k\in \N \, \forall n\geq {\rm R}(k) \, \left( r_n \leq \dfrac{1}{k+1} \right)$,
		\item ${\rm G}$ is a Cauchy rate for $(\sum \gamma_n)$.
	\end{enumerate}
	If for all $n\in \N$, $s_{n+1}\leq (1-\alpha_n)s_n+\alpha_nr_n + \gamma_n$,
	then $(s_n)$ converges to zero and
	\begin{equation*}
		\forall k \in \N \,\forall n\geq \theta_1[{\rm A}, {\rm R}, {\rm G}, D](k) \, \left(s_n\leq \dfrac{1}{k+1}\right),
	\end{equation*}
	\noindent where $\theta_1[{\rm A}, {\rm R}, {\rm G}, D](k):={\rm A}\left(M+\lceil \ln(3D(k+1))\rceil\right)+1$, with $M:=\max\{ {\rm R}(3k+2), {\rm G}(3k+2)+1 \}$.\\
\end{lemma}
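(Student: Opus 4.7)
The plan is to carry out the classical Banach-style iteration of the recurrence and then bound the three resulting terms separately, using the three quantitative hypotheses to control each one by $\tfrac{1}{3(k+1)}$, so that their sum is $\tfrac{1}{k+1}$. Concretely, for $n\ge n_0$ a straightforward induction on $n-n_0$ gives
\begin{equation*}
s_{n}\le s_{n_0}\prod_{i=n_0}^{n-1}(1-\alpha_i)+\sum_{i=n_0}^{n-1}\alpha_i r_i\prod_{j=i+1}^{n-1}(1-\alpha_j)+\sum_{i=n_0}^{n-1}\gamma_i\prod_{j=i+1}^{n-1}(1-\alpha_j).
\end{equation*}
I would then take $n_0:=M=\max\{{\rm R}(3k+2),{\rm G}(3k+2)+1\}$; by monotonicity of ${\rm R}$ this guarantees $r_i\le \tfrac{1}{3(k+1)}$ for all $i\ge n_0$, and by the Cauchy-rate property of ${\rm G}$ it guarantees $\sum_{i=n_0}^{\infty}\gamma_i\le\tfrac{1}{3(k+1)}$.

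For the middle term I would use the telescoping identity
\begin{equation*}
\alpha_i\prod_{j=i+1}^{n-1}(1-\alpha_j)=\prod_{j=i+1}^{n-1}(1-\alpha_j)-\prod_{j=i}^{n-1}(1-\alpha_j),
\end{equation*}
so the sum $\sum_{i=n_0}^{n-1}\alpha_i\prod_{j=i+1}^{n-1}(1-\alpha_j)=1-\prod_{j=n_0}^{n-1}(1-\alpha_j)\le 1$, giving a bound of $\tfrac{1}{3(k+1)}$. For the last term, since each factor $(1-\alpha_j)\le 1$, the product can be discarded and the tail of $\sum\gamma_i$ handles it. For the first term I would use $1-x\le e^{-x}$ to get $\prod_{i=n_0}^{n-1}(1-\alpha_i)\le\exp\!\bigl(-\sum_{i=n_0}^{n-1}\alpha_i\bigr)$, so I need $\sum_{i=n_0}^{n-1}\alpha_i\ge \ln(3D(k+1))$ in order to drive $D\prod\le\tfrac{1}{3(k+1)}$.

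To turn this into a rate via ${\rm A}$, I would observe that $\sum_{i=0}^{n_0-1}\alpha_i<n_0\le M$ because $\alpha_i\in\,]0,1[$, hence
\begin{equation*}
\sum_{i=n_0}^{n-1}\alpha_i\;\ge\;\sum_{i=0}^{n-1}\alpha_i-M.
\end{equation*}
Choosing $n\ge {\rm A}\bigl(M+\lceil\ln(3D(k+1))\rceil\bigr)+1$ and invoking (Q\ref{q2}) for ${\rm A}$ with input $M+\lceil\ln(3D(k+1))\rceil$ forces $\sum_{i=0}^{n-1}\alpha_i\ge M+\lceil\ln(3D(k+1))\rceil$, so the required lower bound $\lceil\ln(3D(k+1))\rceil\ge \ln(3D(k+1))$ on $\sum_{i=n_0}^{n-1}\alpha_i$ follows. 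Note also that monotonicity of ${\rm A}$ together with $n_0\le M\le M+\lceil\ln(3D(k+1))\rceil\le{\rm A}(M+\lceil\ln(3D(k+1))\rceil)$ (using ${\rm A}(m)\ge m$, which holds because $\alpha_i<1$) ensures $n\ge n_0$, so the iteration from $n_0$ is legal.

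No genuine obstacle is expected: this is essentially the standard Xu-type computation made quantitative, and the only points demanding care are (i) the bookkeeping that converts the assumption $\sum_{i=0}^{{\rm A}(k)}\alpha_i\ge k$ into a bound on the suffix sum $\sum_{i=n_0}^{n-1}\alpha_i$ (handled by the trivial estimate $\sum_{i<n_0}\alpha_i<n_0$), and (ii) the use of monotonicity of ${\rm A}$, ${\rm R}$, ${\rm G}$ to make sure the pointwise choices at $3k+2$ and the shift by $M$ all line up. Summing the three $\tfrac{1}{3(k+1)}$ contributions yields $s_n\le\tfrac{1}{k+1}$ for all $n\ge \theta_1[{\rm A},{\rm R},{\rm G},D](k)$, as desired; convergence to zero is then an immediate consequence.
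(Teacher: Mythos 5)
Your proposal is correct, and the paper itself does not prove Lemma~\ref{xu_seq_reals_qt1} (it is cited from \cite{leustean2020quantitative}); however, the paper's proof of the closely related Lemma~\ref{final_lem_1} uses exactly the same structure. In both arguments one iterates the recurrence from a starting index $n_0$ past $\mathrm{R}$ and $\mathrm{G}$, uses $1-x\le e^{-x}$ together with $\sum_{i<n_0}\alpha_i<n_0$ to turn the divergence rate $\mathrm{A}$ into a decay bound $\prod(1-\alpha_i)\le 1/(cD(k+1))$, and absorbs the $\alpha_i r_i$ contribution via the telescoping identity for $\sum\alpha_i\prod_{j>i}(1-\alpha_j)$ (the paper carries this out as a tight induction keeping the $\frac{1}{4(k+1)}$ term invariant, you present it as an explicit telescope, but it is the same computation). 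Your bookkeeping — the split into three $\frac{1}{3(k+1)}$ pieces, the use of $\mathrm{A}(m)\ge m$ to guarantee $n\ge n_0$, and the observation that the telescoping estimate handles possibly negative $r_i$ correctly since $\alpha_i\prod_{j>i}(1-\alpha_j)\ge 0$ — is all sound and yields precisely $\theta_1$.
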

 
In Lemma~\ref{xu_seq_reals_1}, the fact that $(s_n)$ is bounded follows trivially from the other assumptions. This translates into the easy fact that it is possible to compute a bound $D$ from the remaining data. Namely, a possible value for $D$ is $\lceil \max\{s_0, {\cal R}\} + {\cal G} \rceil$, where ${\cal R}:=\max_{n\leq {\rm R}(0)}\{1, r_n\}$ and ${\cal G}:=1+\sum_{i=0}^{{\rm G}(0)}\gamma_i$ are bounds on the sequences $(r_n)$ and $(\sum \gamma_i)$, respectively.

Consider the condition
\begin{equation}\tag{\rm C2'}\label{c2'}
\forall m\in \N\, \left(\prod_{i= m}^{\infty} (1-\alpha_i)=0\right).
\end{equation}

One can equivalently work with this condition {\rm (\ref{c2'})} instead of considering the condition {\rm (C\ref{c2})}. Hence, it makes sense to also consider a quantitative hypothesis corresponding to {\rm (\ref{c2'})}:
\begin{equation}\tag{\rm Q2'}\label{q2'}
\begin{gathered}
{\rm A'}:\N\times \N\to\N \text{ is a monotone function satisfying}\\
\forall k, m\in \N\, \left( \prod_{i=m}^{{\rm A'}(m,k)}(1-\alpha_i)\leq \frac{1}{k+1}\right),
\end{gathered}
\end{equation}
implying that for each $m\in\N$, $A'(m, \cdot)$ is a rate of convergence towards zero for the sequence $\left(\prod_{i=m}^{n}(1-\alpha_i)\right)_n$. By saying that {\rm A'} is monotone we mean that it is monotone in both variables,
\[
\forall k, k', m, m' \in \N\, \left( k\leq k' \land m\leq m' \to {\rm A'}(m, k)\leq {\rm A'}(m', k')\right).
\]

For particular sequences $(\alpha_n)$, changing between this two conditions may prove to be useful since a function satisfying {\rm (Q\ref{q2})} may have different complexity than a function satisfying {\rm (\ref{q2'})}. An easy example of this is the sequence $(\frac{1}{n+1})$: while we have linear rates of convergence towards zero for $\left(\prod_{i= m}^{n} (1-\frac{1}{i+1})\right)_n$, we only have an exponential rate of divergence for $\left(\sum_{i=0}^{n} \frac{1}{i+1}\right)_n$.\\
Next we state a version of the previous lemma with the condition {\rm (Q\ref{q2})} replaced by the condition \eqref{q2'} -- see \cite[lemma 2.4]{kornlein2015quantitative} and also \cite{leustean2020quantitative}.
\begin{lemma}\label{xu_seq_reals_qt2}
	Let $(s_n)$ be a bounded sequence of non-negative real numbers and $D\in\N$ a positive upper bound on $(s_n)$. Consider sequences of real numbers $(\alpha_n)\subset\, ]0,1[$, $(r_n)\subset \R$ and $(\gamma_n)\subset \R^+_0$ and assume the existence of monotone functions ${\rm A'}: \N\times \N \to \N$ and ${\rm R}$, ${\rm G}:\N \to \N$ such that
	\begin{enumerate}[{\rm (i)}]
		\item ${\rm A'}$ satisfies \eqref{q2'},
		\item ${\rm R}$ is such that $\forall k\in \N \, \forall n\geq {\rm R}(k) \, \left( r_n \leq \frac{1}{k+1} \right)$,
		\item ${\rm G}$ is a Cauchy rate for $(\sum \gamma_n)$.
	\end{enumerate}
	 If for all $n\in \N$, $s_{n+1}\leq (1-\alpha_n)s_n+\alpha_nr_n + \gamma_n$,	then $(s_n)$ converges to zero and
	\begin{equation*}
	\forall k \in \N \,\forall n\geq \theta_2[{\rm A'}, {\rm R},{\rm G}, D](k) \, \left(s_n\leq \dfrac{1}{k+1}\right),
	\end{equation*}
	\noindent where $\theta_2[{\rm A'}, {\rm R}, {\rm G}, D](k):={\rm A'}(M, 3D(k+1)-1)+1$, with $M$ as before.
\end{lemma}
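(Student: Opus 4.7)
The plan is to iterate the recurrence $s_{n+1} \leq (1-\alpha_n)s_n + \alpha_n r_n + \gamma_n$ down to a well-chosen base index $m$ and bound the three resulting contributions separately. A routine induction (using that $1-\alpha_n > 0$ and $s_{n+1} \geq 0$, so multiplying the inequality by $1-\alpha_{n+1}$ preserves its direction) shows that for every $n > m$
\[
s_n \leq \left(\prod_{i=m}^{n-1}(1-\alpha_i)\right) s_m + \sum_{i=m}^{n-1}\alpha_i r_i \prod_{j=i+1}^{n-1}(1-\alpha_j) + \sum_{i=m}^{n-1}\gamma_i \prod_{j=i+1}^{n-1}(1-\alpha_j).
\]
The plan is to pick $m$ so that the last two summands are at most $\frac{1}{3(k+1)}$ each, and then pick $n$ so that the first summand is at most $\frac{1}{3(k+1)}$ as well.

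Concretely, take $m := M = \max\{{\rm R}(3k+2), {\rm G}(3k+2)+1\}$. Hypothesis (ii) then gives $r_i \leq \frac{1}{3(k+1)}$ for all $i \geq m$; combining this with the non-negativity of the weights $\alpha_i\prod_{j=i+1}^{n-1}(1-\alpha_j)$ and the telescoping identity $\sum_{i=m}^{n-1}\alpha_i\prod_{j=i+1}^{n-1}(1-\alpha_j) = 1 - \prod_{i=m}^{n-1}(1-\alpha_i) \leq 1$, bounds the middle summand by $\frac{1}{3(k+1)}$. Hypothesis (iii) yields $\sum_{i=m}^{\infty}\gamma_i \leq \frac{1}{3(k+1)}$ (since $m \geq {\rm G}(3k+2)+1$), which together with $\prod_{j=i+1}^{n-1}(1-\alpha_j) \in [0,1]$ handles the third summand. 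For the leading term, $s_m \leq D$ reduces matters to forcing $\prod_{i=m}^{n-1}(1-\alpha_i) \leq \frac{1}{3D(k+1)}$, which by \eqref{q2'} is guaranteed as soon as $n-1 \geq {\rm A'}(m, 3D(k+1)-1)$, that is, $n \geq \theta_2[{\rm A'}, {\rm R}, {\rm G}, D](k)$.

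The main subtlety to handle with care is that $(r_n)$ is allowed to range over $\R$ rather than $\R^+_0$, so on the middle summand one cannot discard any term based on its sign; one must explicitly invoke the non-negativity of the weights so that the pointwise upper bound on $r_i$ propagates to an upper bound on the weighted sum. This is essentially the only new ingredient beyond the proof of Lemma~\ref{xu_seq_reals_qt1}, where \eqref{q2'} replaces the quantitative form of divergence of $\sum\alpha_n$ and is used directly on the product $\prod_{i=m}^{n-1}(1-\alpha_i)$ rather than via the inequality $1-x\leq e^{-x}$.
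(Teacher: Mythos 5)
Your proof is correct. Note, however, that the paper itself does not prove Lemma~\ref{xu_seq_reals_qt2}; it cites it from K\"ornlein and from Leu\c{s}tean--Pinto, so there is no in-paper proof to compare against. That said, your argument follows exactly the scheme the paper uses for the closely related Lemmas~\ref{final_lem_1} and~\ref{final_lem_2}: unwind the recursion down to a base index $m$, bound the geometric weight $\prod_{i=m}^{n-1}(1-\alpha_i)$ on $s_m$ directly via the (Q2') rate $\mathrm{A}'$, bound the weighted $r$-sum via the telescoping identity $\sum_{i=m}^{n-1}\alpha_i\prod_{j=i+1}^{n-1}(1-\alpha_j)=1-\prod_{i=m}^{n-1}(1-\alpha_i)\leq 1$, and bound the weighted $\gamma$-sum by the tail $\sum_{i\geq m}\gamma_i$ using the Cauchy rate. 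Your choice $m=M=\max\{\mathrm{R}(3k+2),\mathrm{G}(3k+2)+1\}$ and the threshold $n\geq\mathrm{A}'(M,3D(k+1)-1)+1$ recover $\theta_2$ exactly, and the point you flag — that with $(r_n)\subset\R$ one must use non-negativity of the weights $\alpha_i\prod_{j>i}(1-\alpha_j)$ rather than discard terms by sign — is the right subtlety to guard. No gaps.
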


The need for a quantitative version of Lemma~\ref{xu_seq_reals_1} comes from its application in the last step of the proof. At that point, Xu considers for the sequence of real numbers $s_n=\|x_n-P_S(x_0)\|^2$. However in our case, since we are working with approximations to $P_S(x_0)$, the inequality \eqref{xu_lem_main_ass} only holds with $s_n+v_n$ in place of $s_n$, where $(v_n)$ is a sequence of errors. The next quantitative lemmas clarify this situation.

\begin{lemma}\label{final_lem_1}
	Let $(s_n)$ be a bounded sequence of non-negative real numbers and $D\in\N$ a positive upper bound on $(s_n)$. Consider sequences of real numbers $(\alpha_n)\subset\, ]0,1[$, $(r_n)\subset \R$, $(v_n)\subset \R$ and $(\gamma_n)\subset \R^+_0$ and assume the existence of a monotone function ${\rm A}$ satisfying ({\rm Q\ref{q2}}). For natural numbers $k, n$ and $p$ assume
	\[\forall m\in[n,p]\, \left(v_m\leq \frac{1}{4(k+1)(p+1)}\land r_m\leq \frac{1}{4(k+1)}\right),\]
	\[\forall m\in\N\, \left(\sum_{i=n}^{n+m}\gamma_i\leq \frac{1}{4(k+1)}\right)\]
	and 
	\[\forall m\in\N \,\left(s_{m+1}\leq (1-\alpha_m)(s_m+v_m)+\alpha_mr_m+\gamma_m\right).\]
	Then
	\[\forall m\in[\sigma_1(k,n),p]\, \left(s_m\leq \frac{1}{k+1}\right),\]
	with $\sigma_1(k,n):= \sigma_1[{\rm A},D](k,n):={\rm A}\left(n+\lceil \ln(4D(k+1))\rceil\right)+1$.
\end{lemma}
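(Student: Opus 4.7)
The plan is to mimic the standard technique used for Lemma~\ref{xu_seq_reals_qt1}: iterate the recursion starting from index $n$ and then control the resulting product $\prod(1-\alpha_i)$ via the exponential bound $1-\alpha \leq e^{-\alpha}$ together with the divergence rate ${\rm A}$. The presence of the extra perturbation $(1-\alpha_m)v_m$ is absorbed because the hypothesis bounds each $v_m$ by $\frac{1}{4(k+1)(p+1)}$, and there are at most $p+1$ such terms in the unfolded sum up to index $p$.

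First I would unfold the recursion to obtain, by a straightforward induction on $j \geq 0$,
\begin{equation*}
s_{n+j+1} \leq \Bigl(\prod_{i=n}^{n+j}(1-\alpha_i)\Bigr) s_n + \sum_{i=n}^{n+j}\Bigl(\prod_{l=i+1}^{n+j}(1-\alpha_l)\Bigr)\bigl((1-\alpha_i)v_i + \alpha_i r_i + \gamma_i\bigr).
\end{equation*}
For any $m \in [\sigma_1(k,n), p]$ write $j := m-n-1$, so that $i$ ranges over $[n, m-1] \subseteq [n, p]$ and the bounds on $v_i$ and $r_i$ apply throughout. Using $\prod_{l=i+1}^{n+j}(1-\alpha_l)(1-\alpha_i) \leq 1$, the $v$-contribution is bounded by $(j+1)\cdot\frac{1}{4(k+1)(p+1)} \leq \frac{1}{4(k+1)}$ since $j+1 = m-n \leq p+1$. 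The $r$-contribution is bounded by $\frac{1}{4(k+1)}$ via the telescoping identity $\sum_{i=n}^{n+j}\alpha_i\prod_{l=i+1}^{n+j}(1-\alpha_l) = 1-\prod_{i=n}^{n+j}(1-\alpha_i) \leq 1$. The $\gamma$-contribution is bounded by $\sum_{i=n}^{n+j}\gamma_i \leq \frac{1}{4(k+1)}$ directly from the hypothesis.

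It remains to show $\prod_{i=n}^{n+j}(1-\alpha_i) \cdot s_n \leq \frac{1}{4(k+1)}$, for which $\prod_{i=n}^{n+j}(1-\alpha_i) \leq \frac{1}{4D(k+1)}$ suffices since $s_n \leq D$. Using $1-\alpha_i \leq e^{-\alpha_i}$, this reduces to $\sum_{i=n}^{n+j}\alpha_i \geq \ln(4D(k+1))$. Because each $\alpha_i < 1$ gives $\sum_{i=0}^{n-1}\alpha_i < n$, and because ${\rm A}$ satisfies (Q\ref{q2}), as soon as $n+j \geq {\rm A}(n+\lceil\ln(4D(k+1))\rceil)$ the monotonicity of the partial sums yields $\sum_{i=n}^{n+j}\alpha_i \geq \lceil\ln(4D(k+1))\rceil \geq \ln(4D(k+1))$. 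Since $m \geq \sigma_1(k,n)$ means precisely $n+j = m-1 \geq {\rm A}(n+\lceil\ln(4D(k+1))\rceil)$, combining the four estimates gives $s_m \leq \frac{1}{k+1}$.

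The only delicate point is bookkeeping: one has to verify that the factor $(p+1)$ in the hypothesis on $v_m$ matches the length of the unfolded sum, and that the index range $[n, n+j]$ stays inside $[n,p]$ for every $m \leq p$ — both of which are immediate once $j = m-n-1$ is fixed. I do not anticipate a substantive obstacle beyond this indexing check; the rest is the textbook argument behind Lemma~\ref{xu_seq_reals_qt1}, extended with one additional summand.
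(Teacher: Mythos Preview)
Your proposal is correct and follows essentially the same route as the paper's proof: unfold the recursion from index $n$, bound each of the four contributions separately, and control the product $\prod(1-\alpha_i)$ via $1-\alpha\leq e^{-\alpha}$ together with {\rm(Q\ref{q2})} and the estimate $\sum_{i=0}^{n-1}\alpha_i<n$. The only cosmetic difference is that the paper applies the bounds on $v_m$ and $r_m$ \emph{inside} the induction (carrying a running term $\tfrac{1}{4(k+1)}$ for the $r$-part), whereas you first write the raw unfolded formula and then bound the $r$-contribution via the explicit telescoping identity $\sum_{i=n}^{n+j}\alpha_i\prod_{l=i+1}^{n+j}(1-\alpha_l)=1-\prod_{i=n}^{n+j}(1-\alpha_i)$; the two are equivalent.
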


\begin{proof}
	Consider $k,n$ and $p$ such that the premises of the lemma hold. We may assume $p\geq \sigma_1(k,n)$, otherwise the result is trivially true. Since ${\rm A}(m)+1\geq m$, we conclude that $p\geq n$. By induction, we see that for all $m\leq p-n$,
	\begin{equation}\label{final_lem_1_eq1}
	s_{n+m+1}\leq \left(\prod_{i=n}^{n+m}(1-\alpha_i)\right)s_n+\frac{1}{4(k+1)(p+1)}\sum_{j=n}^{n+m}\prod_{i=j}^{n+m}(1-\alpha_i)+\frac{1}{4(k+1)}+\sum_{i=n}^{n+m}\gamma_i.
	\end{equation}
	
	The base case $m=0$ follows from the assumptions of the lemma. For the induction step $m+1\leq p-n$, we have
	\begin{align*}
	&s_{n+m+2}\leq (1-\alpha_{n+m+1})(s_{n+m+1}+v_{n+m+1})+\alpha_{n+m+1}r_{n+m+1}+\gamma_{n+m+1}\\
	&\;\leq (1-\alpha_{n+m+1})\left[\left(\prod_{i=n}^{n+m}(1-\alpha_i)\right)s_n+\frac{1}{4(k+1)(p+1)}\sum_{j=n}^{n+m}\prod_{i=j}^{n+m}(1-\alpha_i)+\frac{1}{4(k+1)}+\sum_{i=n}^{n+m}\gamma_i\right]\\
	&\qquad +(1-\alpha_{n+m+1})v_{n+m+1}+\alpha_{n+m+1}\frac{1}{4(k+1)}+ \gamma_{n+m+1}\\
	&\;\leq \left(\prod_{i=n}^{n+m+1}(1-\alpha_i)\right)s_n + \frac{1}{4(k+1)(p+1)}\sum_{j=n}^{n+m+1}\prod_{i=j}^{n+m+1}(1-\alpha_i) + \frac{1}{4(k+1)}+\sum_{i=n}^{n+m+1}\gamma_i,
	\end{align*}
	using the induction hypothesis and the fact that, since $n+m+1\in[n,p]$, $r_{n+m+1}\leq \frac{1}{4(k+1)}$. This concludes the induction.\\
	
	Since for $m\leq p-n$, we have $\sum_{j=n}^{n+m}\prod_{i=j}^{n+m}(1-\alpha_i)\leq m+1\leq p+1$, we conclude
	\begin{equation*}
	\frac{1}{4(k+1)(p+1)}\sum_{j=n}^{n+m}\prod_{i=j}^{n+m}(1-\alpha_i)\leq \frac{1}{4(k+1)}.
	\end{equation*}
	Since $\sum_{i=n}^{n+m}\gamma_i\leq \frac{1}{4(k+1)}$, by \eqref{final_lem_1_eq1}, we conclude that for all $m\leq p-n$
	\begin{equation}\label{final_lem_1_eq2}
	s_{n+m+1}\leq D\left(\prod_{i=n}^{n+m}(1-\alpha_i)\right)+\frac{3}{4(k+1)}.
	\end{equation}
	
	Define the natural number $K:={\rm A}\left(n+\lceil \ln(4D(k+1))\rceil\right)-n$. For $m\geq K$, we have
	\begin{equation*}
	\sum_{i=n}^{n+m}\alpha_i\geq \sum_{i=n}^{n+K}\alpha_i=\sum_{i=0}^{{\rm A}\left(n+\lceil \ln(4D(k+1))\rceil\right)}\alpha_i - \sum_{i=0}^{n-1}\alpha_i\geq n+\ln(4D(k+1))- \sum_{i=0}^{n-1}\alpha_i\geq \ln(4D(k+1)).
	\end{equation*}
	Using the fact that for $x\in\R_0^+$, $1-x\leq \exp(-x)$, we obtain for all $m\geq K$,
	\begin{equation}\label{final_lem_1_eq3}
	D\prod_{i=n}^{n+m}(1-\alpha_i)\leq D\exp\left(-\sum_{i=n}^{n+m}\alpha_i\right)\leq \frac{1}{4(k+1)}.
	\end{equation}
	Finally, from \eqref{final_lem_1_eq2} and \eqref{final_lem_1_eq3} together, for $m\in[K,p-n]$, $s_{n+m+1}\leq \frac{1}{k+1}$
	and thus, for $m\in[n+K+1, p]=[\sigma_1(k,n), p]$, we have $s_m\leq \frac{1}{k+1}$.
\end{proof}

\begin{remark}
	Notice that the function $\sigma_1$, in theory, could additionally depend on the value of $p$ and the fact that it doesn't is crucial for the analysis (see the proof of Theorem~\ref{main1}). Furthermore, if the assumptions would hold for arbitrary $p\in\N$, then $v_n\equiv 0$ and one would conclude $\forall m\geq \sigma_1(k,n)\, \left(s_m\leq \frac{1}{k+1}\right)$, which also explains the obvious connection with the function $\theta_1$ in Lemma~\ref{xu_seq_reals_qt1}.
\end{remark}
In a similar way, we can again conclude the result but with the function ${\rm A'}$ satisfying {\rm (\ref{q2'})} instead.
\begin{lemma}\label{final_lem_2}
	Let $(s_n)$ be a bounded sequence of non-negative real numbers and $D\in\N$ a positive upper bound on $(s_n)$. Consider sequences of real numbers $(\alpha_n)\subset\, ]0,1[$, $(r_n)\subset \R$, $(v_n)\subset \R$ and $(\gamma_n)\subset \R^+_0$ and assume the existence of a monotone function ${\rm A'}:\N \times \N \to \N$ satisfying the condition {\rm (\ref{q2'})}. For natural numbers $k, n$ and $p$ assume
	\[\forall m\in[n,p]\, \left(v_m\leq \frac{1}{4(k+1)(p+1)}\land r_m\leq \frac{1}{4(k+1)}\right),\]
	\[\forall m\in\N\, \left(\sum_{i=n}^{n+m}\gamma_i\leq \frac{1}{4(k+1)}\right)\]
	and
	\[\forall m\in\N\, \left(s_{m+1}\leq (1-\alpha_m)(s_m+v_m)+\alpha_mr_m+\gamma_m\right).\]
	Then
	\[\forall m\in[\sigma_2(k,n),p]\, \left(s_m\leq \frac{1}{k+1}\right),\]
	with $\sigma_2(k,n):=\sigma_2[{\rm A'}, D](k,n):={\rm A'}\left(n,4D(k+1)-1\right)+1$.
\end{lemma}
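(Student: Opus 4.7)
The plan is to follow the proof of Lemma~\ref{final_lem_1} essentially verbatim up to its intermediate estimate, and then replace the step that used (Q\ref{q2}) and the inequality $1-x\leq e^{-x}$ by a direct application of the (Q2') hypothesis on ${\rm A'}$. As before, we may assume $p\geq \sigma_2(k,n)$ (otherwise the conclusion is vacuous), which in particular forces $p\geq n$.

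By induction on $m\leq p-n$, using the recursion and the bounds on $v_m$, $r_m$ valid on $[n,p]$, one obtains
$$s_{n+m+1}\leq\left(\prod_{i=n}^{n+m}(1-\alpha_i)\right)s_n+\frac{1}{4(k+1)(p+1)}\sum_{j=n}^{n+m}\prod_{i=j}^{n+m}(1-\alpha_i)+\frac{1}{4(k+1)}+\sum_{i=n}^{n+m}\gamma_i,$$
with the same algebra as in Lemma~\ref{final_lem_1}. Combining the trivial majorization $\sum_{j=n}^{n+m}\prod_{i=j}^{n+m}(1-\alpha_i)\leq p+1$, the hypothesis $\sum_{i=n}^{n+m}\gamma_i\leq \frac{1}{4(k+1)}$, and $s_n\leq D$ yields the intermediate estimate
$$s_{n+m+1}\leq D\prod_{i=n}^{n+m}(1-\alpha_i)+\frac{3}{4(k+1)}\quad\text{for all } m\in[0,p-n].$$

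The remaining task is to bound the remaining product. This is where the argument diverges from Lemma~\ref{final_lem_1}: applying \eqref{q2'} with second argument $4D(k+1)-1$ gives
$$\prod_{i=n}^{{\rm A'}(n,\,4D(k+1)-1)}(1-\alpha_i)\leq \frac{1}{4D(k+1)},$$
and, since each factor $(1-\alpha_i)\in\,]0,1[$, this bound persists for every longer product $\prod_{i=n}^{n+m}(1-\alpha_i)$ with $n+m\geq {\rm A'}(n,4D(k+1)-1)$. Hence $D\prod_{i=n}^{n+m}(1-\alpha_i)\leq \frac{1}{4(k+1)}$ in that range, and combining with the intermediate estimate gives $s_{n+m+1}\leq \frac{1}{k+1}$ whenever $n+m+1\geq {\rm A'}(n,4D(k+1)-1)+1=\sigma_2(k,n)$ and $m\leq p-n$. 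Reindexing $\ell:=n+m+1$ produces $s_\ell\leq \frac{1}{k+1}$ for $\ell\in[\sigma_2(k,n),p]$, as claimed.

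There is no real obstacle beyond parameter bookkeeping: the non-trivial choice is exactly the second argument $4D(k+1)-1$, selected so that $1/((4D(k+1)-1)+1)=1/(4D(k+1))$ absorbs the factor $D$ and leaves the residual $\frac{3}{4(k+1)}+\frac{1}{4(k+1)}=\frac{1}{k+1}$. This is the direct analogue of the role played by the logarithmic bound in the (Q\ref{q2})-version of Lemma~\ref{final_lem_1}.
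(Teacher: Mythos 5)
Your proposal is correct and follows the paper's own proof essentially verbatim: reuse the intermediate estimate $s_{n+m+1}\leq D\prod_{i=n}^{n+m}(1-\alpha_i)+\frac{3}{4(k+1)}$ from Lemma~\ref{final_lem_1}, then apply \eqref{q2'} with second argument $4D(k+1)-1$ (equivalently, the paper's $K:={\rm A'}(n,4D(k+1)-1)-n$) and use monotonicity of the partial products to absorb the factor $D$. The only cosmetic difference is that you make explicit the persistence of the product bound for longer indices, which the paper leaves implicit.
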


\begin{proof}
	Following the proof of the previous lemma, we conclude that for all $m\leq p-n$
	\begin{equation*}
	s_{n+m+1}\leq D\left(\prod_{i=n}^{n+m}(1-\alpha_i)\right)+\frac{3}{4(k+1)}
	\end{equation*}
	
	Define the natural number $K:={\rm A'}\left(n,4D(k+1)-1\right)-n$. By {\rm (\ref{q2'})}, we have for all $m\geq K$,
	\begin{equation*}
	\prod_{i=n}^{n+m}(1-\alpha_i)\leq \prod_{i=n}^{n+K}(1-\alpha_i)\leq \frac{1}{4D(k+1)}.
	\end{equation*}
		
	This shows that for $m\in[n+K+1, p]=[\sigma_2(k,n), p]$, $s_m\leq \frac{1}{k+1}$.
\end{proof}

\section{Main results}

We start by computing a bound on the sequence $(x_n)$.

\begin{lemma}\label{seq_bounded}
	Let $p$ be an arbitrary point in $S$. Then, for all $n\in \N$, $\|x_n-p\|\leq \|x_0-p\|+\sum_{i=0}^{n-1}\|e_i\|$.\\
	Let ${\rm E}:\N\to\N$ be a monotone function satisfying \rm(Q\ref{q4}), i.e. a \emph{Cauchy rate} for $\left(\sum_{i=0}^{n} \|e_i\|\right)$, and ${\cal E}\in\N$ a natural number satisfying ${\cal E}\geq 1+ \sum_{i=0}^{{\rm E}(0)}\|e_i\|$. We have for all $n\in\N$
	\begin{align}
&\|x_n-p\|\leq \|x_0-p\| + \mathcal{E}\label{bound1}\\
&\|x_n\|\leq  \|x_0-p\|+\|p\| + \mathcal{E}\label{bound2}\\
&\|x_n-x_0\|\leq 2\|x_0-p\| +\mathcal{E}\label{bound3}
	\end{align}
\end{lemma}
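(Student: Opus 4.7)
The plan is to prove the main inequality $\|x_n-p\|\leq \|x_0-p\|+\sum_{i=0}^{n-1}\|e_i\|$ by straightforward induction on $n$, and then obtain the three displayed bounds \eqref{bound1}--\eqref{bound3} as easy consequences using the Cauchy rate $\mathrm{E}$ together with the triangle inequality.

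For the induction, the base case $n=0$ is immediate (empty sum convention). For the step, recall that since $p\in S$, we have $J_{\beta_n}(p)=p$. Writing
$$x_{n+1}-p=\alpha_n(x_0-p)+(1-\alpha_n)\bigl(J_{\beta_n}(x_n)-p\bigr)+(1-\alpha_n)e_n,$$
the triangle inequality and nonexpansiveness of $J_{\beta_n}$ yield
$$\|x_{n+1}-p\|\leq \alpha_n\|x_0-p\|+(1-\alpha_n)\|x_n-p\|+(1-\alpha_n)\|e_n\|.$$
Plugging in the induction hypothesis and using $(1-\alpha_n)\leq 1$ gives $\|x_{n+1}-p\|\leq \|x_0-p\|+\sum_{i=0}^{n}\|e_i\|$, closing the induction.

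For \eqref{bound1}, the key point is that the Cauchy rate $\mathrm{E}$ (with $k=0$) bounds every tail of the series: for any $n>\mathrm{E}(0)$,
$$\sum_{i=0}^{n-1}\|e_i\|=\sum_{i=0}^{\mathrm{E}(0)}\|e_i\|+\sum_{i=\mathrm{E}(0)+1}^{n-1}\|e_i\|\leq \sum_{i=0}^{\mathrm{E}(0)}\|e_i\|+1\leq \mathcal{E},$$
and the inequality is trivial for $n\leq \mathrm{E}(0)$. Combined with the main inequality this yields $\|x_n-p\|\leq\|x_0-p\|+\mathcal{E}$. Then \eqref{bound2} and \eqref{bound3} follow from a single application of the triangle inequality, writing $x_n=x_n-p+p$ and $x_n-x_0=(x_n-p)-(x_0-p)$ respectively.

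There is no real obstacle here; the only mildly nontrivial point is recognising that the Cauchy-rate formulation of \rm(Q\ref{q4}), rather than a direct bound on $\sum\|e_i\|$, still suffices because $k=0$ gives a uniform bound of $1$ on all tails past $\mathrm{E}(0)$, which is why the constant $\mathcal{E}$ can be chosen to depend only on $\mathrm{E}(0)$.
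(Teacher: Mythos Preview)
Your proof is correct and follows essentially the same approach as the paper: an induction based on the nonexpansiveness of $J_{\beta_n}$ and the fact that $p$ is a fixed point, followed by the observation that the Cauchy rate at $k=0$ gives $\sum_{i=0}^{n-1}\|e_i\|\leq \mathcal{E}$, with \eqref{bound2} and \eqref{bound3} coming from the triangle inequality. The only cosmetic difference is that you keep the factor $(1-\alpha_n)$ in front of $\|e_n\|$ a step longer before discarding it.
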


\begin{proof}
	Since $p$ is a point in $S$, it is a fixed point for resolvent functions. For any $n\in\N$, we have
	\begin{align}
	\|x_{n+1}-p\|&\leq \|\alpha_nx_0+(1-\alpha_n)(J_{\beta_n}(x_n)+e_n)-p\|\nonumber\\
	&\leq \alpha_n\|x_0-p\|+(1-\alpha_n)\|J_{\beta_n}(x_n)-p\|+\|e_n\|\nonumber\\
	&\leq \alpha_n\|x_0-p\|+(1-\alpha_n)\|x_n-p\|+\|e_n\|\label{ind_bounded}
	\end{align}
	
	By induction on $n\in\N$, we see
	\begin{equation}\label{radius_of_ball}
	\forall n\in \N\, \left( \|x_n-p\|\leq \|x_0-p\|+\sum_{i=0}^{n-1}\|e_i\| \right).
	\end{equation}
	The case $n=0$ is trivial. For the induction step $n+1$, use \eqref{ind_bounded} and the induction hypothesis.
	\begin{align*}
	\|x_{n+1}-p\|&\leq \alpha_n\|x_0-p\|+(1-\alpha_n)\|x_n-p\|+\|e_n\| \\
	&\leq \alpha_n\|x_0-p\|+(1-\alpha_n)\left( \|x_0-p\|+\sum_{i=0}^{n-1}\|e_i\| \right)+\|e_n\|\\
	&\leq \|x_0-p\|+\sum_{i=0}^{n}\|e_i\|
	\end{align*}
	
	Now the remaining inequalities follow easily. Since ${\rm E}$ is a Cauchy rate for $\left(\sum_{i=0}^{n} \|e_i\|\right)$ and ${\cal E} \geq 1+ \sum_{i=0}^{{\rm E}(0)}\|e_i\|$, we get that for all $n\in\N$, $\sum_{i=0}^{n}\|e_i\| \leq {\cal E}$ and the inequality \eqref{bound1} follows. Since $\|x_n\|\leq \|x_n-p\|+\|p\|$, \eqref{bound2} follows from \eqref{bound1}. Similarly for \eqref{bound3}.
\end{proof}

The following rate of convergence is easily derived from the original proof of Theorem~\ref{original_xu}.
\begin{lemma}\label{asymp_reg0}
	Consider monotone functions ${\rm a}$, ${\rm E}:\N\to\N$ satisfying {\rm (Q\ref{q1})} and {\rm (Q\ref{q4})}, respectively. Let ${\cal E}, {\cal D}\in\N$ be natural numbers satisfying ${\cal E}\geq 1+ \sum_{i=0}^{{\rm E}(0)}\|e_i\|$ and ${\cal D} \geq \|x_0-p\|$, for some $p\in S$. For every $k\in\N$, define $\xi[{\rm a}, {\rm E}, {\cal E}, {\cal D}](k):=\max\{{\rm a}(2\left(2{\cal D}+{\cal E}\right)(k+1)-1), {\rm E}(2k+1)+1\}$. Then
	\begin{equation*}
	\forall k\in \N \,\forall n\geq \xi[{\rm a}, {\rm E}, {\cal E}, {\cal D}](k) \, \left(\|x_{n+1}-J_{\beta_n}(x_n)\|\leq \frac{1}{k+1}\right).
	\end{equation*}
\end{lemma}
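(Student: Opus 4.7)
The plan is to exploit the recursive form of \eqref{hppa} directly: subtracting $J_{\beta_n}(x_n)$ from both sides of $x_{n+1}=\alpha_n x_0+(1-\alpha_n)(J_{\beta_n}(x_n)+e_n)$ yields
\[
x_{n+1}-J_{\beta_n}(x_n)=\alpha_n\bigl(x_0-J_{\beta_n}(x_n)\bigr)+(1-\alpha_n)e_n,
\]
so by the triangle inequality the target norm is controlled by $\alpha_n\|x_0-J_{\beta_n}(x_n)\|+\|e_n\|$. The strategy is then to bound each summand separately by $\frac{1}{2(k+1)}$: the first using the quantitative hypothesis ({\rm Q\ref{q1}}) on $\alpha_n$ after uniformly bounding $\|x_0-J_{\beta_n}(x_n)\|$, and the second using the Cauchy rate ({\rm Q\ref{q4}}) for $(\sum\|e_n\|)$.

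For the uniform bound, I would use that $p\in S$ is a fixed point of $J_{\beta_n}$, hence by nonexpansivity $\|J_{\beta_n}(x_n)-p\|\leq \|x_n-p\|$. Combined with Lemma~\ref{seq_bounded} (namely \eqref{bound1}, which gives $\|x_n-p\|\leq \|x_0-p\|+\mathcal{E}\leq \mathcal{D}+\mathcal{E}$) and the triangle inequality $\|x_0-J_{\beta_n}(x_n)\|\leq \|x_0-p\|+\|p-J_{\beta_n}(x_n)\|$, one obtains the uniform estimate $\|x_0-J_{\beta_n}(x_n)\|\leq 2\mathcal{D}+\mathcal{E}$. Then, for $n\geq {\rm a}\bigl(2(2\mathcal{D}+\mathcal{E})(k+1)-1\bigr)$, condition ({\rm Q\ref{q1}}) gives $\alpha_n\leq \frac{1}{2(2\mathcal{D}+\mathcal{E})(k+1)}$, so the first summand is at most $\frac{1}{2(k+1)}$.

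For the error term, by ({\rm Q\ref{q4}}) applied with parameter $2k+1$ and $n'=m-{\rm E}(2k+1)\geq 1$, the single term $\|e_m\|$ is dominated by the tail sum $\sum_{i={\rm E}(2k+1)+1}^{m}\|e_i\|\leq \frac{1}{2k+2}$, for every $m\geq {\rm E}(2k+1)+1$. Hence $(1-\alpha_n)\|e_n\|\leq \|e_n\|\leq \frac{1}{2(k+1)}$ whenever $n\geq {\rm E}(2k+1)+1$. Taking $n\geq \xi[{\rm a},{\rm E},\mathcal{E},\mathcal{D}](k)$ ensures both conditions simultaneously, and adding the two estimates gives the desired bound $\|x_{n+1}-J_{\beta_n}(x_n)\|\leq\frac{1}{k+1}$. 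The argument is essentially routine, and the only subtlety is the correct accounting of the constants so that the resulting $\xi$ matches the one stated in the lemma; there is no real obstacle.
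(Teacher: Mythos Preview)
Your proof is correct and follows essentially the same route as the paper: both expand the recursion to get $\|x_{n+1}-J_{\beta_n}(x_n)\|\leq \alpha_n\|x_0-J_{\beta_n}(x_n)\|+\|e_n\|$, bound $\|x_0-J_{\beta_n}(x_n)\|\leq 2\mathcal{D}+\mathcal{E}$ via nonexpansivity, $p\in S$, and Lemma~\ref{seq_bounded}, and then use {\rm(Q\ref{q1})} and {\rm(Q\ref{q4})} to make each summand at most $\frac{1}{2(k+1)}$.
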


\begin{proof}
	Let $k\in\N$ be given and consider $n\geq \xi(k):=\xi[{\rm a}, {\rm E}, {\cal E}, {\cal D}](k)$. Since $n\geq {\rm E}(2k+1)+1$, from condition {\rm (Q\ref{q4})}, we have in particular $\|e_n\|\leq \frac{1}{2(k+1)}$. From Lemma~\ref{seq_bounded}, we have $\|x_n-p\|\leq {\cal D}+{\cal E}$, for a certain $p\in S$. Using the fact that $J_{\beta_n}$ is nonexpansive and $p$ is a fixed point of $J_{\beta_n}$, we get
	\begin{align*}
	\|x_{n+1}-J_{\beta_n}(x_n)\|&=\|\alpha_nx_0+(1-\alpha_n)(J_{\beta_n}(x_n)+e_n)-J_{\beta_n}(x_n)\|\\
	&\leq \alpha_n\|x_0-J_{\beta_n}(x_n)\|+\|e_n\|\\
	&\leq \alpha_n(\|x_0-p\|+\|p-J_{\beta_n}(x_n)\|)+\|e_n\| \\
	&\leq \alpha_n(\|x_0-p\|+\|p-x_n\|)+\|e_n\|\\
	&\leq \alpha_n\left(2{\cal D}+{\cal E}\right)+\|e_n\|\\
	&\leq\frac{2{\cal D}+{\cal E}}{2\left(2{\cal D}+{\cal E}\right)(k+1)}+\frac{1}{2(k+1)}= \frac{1}{k+1},
	\end{align*}
	using in the last step the fact that $n\geq {\rm a}(2\left(2{\cal D}+{\cal E}\right)(k+1)-1)$ and the condition {\rm (Q\ref{q1})}.
\end{proof}

Next we compute a rate of asymptotic regularity for the sequence $(x_n)$ in relation to a resolvent function $J_{\gamma}$, for an arbitrary positive real number $\gamma$.
\begin{proposition}\label{asymp_reg}
	Consider a real number $\gamma >0$ and monotone functions ${\rm a}, {\rm B}, {\rm E}:\N\to\N$ satisfying {\rm (Q\ref{q1})}, {\rm (Q\ref{q3})} and {\rm (Q\ref{q4})}, respectively. Let ${\cal E}$, ${\cal D}, \ell\in \N$ be natural numbers satisfying ${\cal E}\geq 1+ \sum_{i=0}^{{\rm E}(0)}\|e_i\|$, ${\cal D} \geq \|x_0-p\|$, for some $p\in S$ and $\ell \geq \gamma$.
	For each $k\in\N$, define
	\begin{equation*}
	\chi_{\ell}(k):=\chi_{\ell}[{\rm a}, {\rm B}, {\rm E}, {\cal E}, {\cal D}](k):=\max\{\xi[{\rm a}, {\rm E}, {\cal E}, {\cal D}](4k+3),{\rm B}(8({\cal D}+{\cal E})(k+1)\ell -1) \}+1
	\end{equation*}
	where $\xi$ is as in Lemma \ref{asymp_reg0}. Then
	\begin{equation*}
	\forall k\in \N \,\forall n \geq \chi_{\ell}(k)\, \left(\|x_n-J_{\gamma}(x_n)\|\leq \frac{1}{k+1}\right).
	\end{equation*}
\end{proposition}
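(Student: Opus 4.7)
The plan is to reduce the control of $\|x_n - J_\gamma(x_n)\|$ to the estimate on $\|x_{m+1} - J_{\beta_m}(x_m)\|$ already furnished by Lemma~\ref{asymp_reg0}, using the resolvent identity (Lemma~\ref{resolvent_identity}) as the bridge between $J_\gamma$ and $J_{\beta_m}$. Given $n \geq \chi_\ell(k) \geq 1$, I would write $n = m+1$, so that $m \geq \xi[{\rm a},{\rm E},{\cal E},{\cal D}](4k+3)$ and $m \geq {\rm B}(8({\cal D}+{\cal E})(k+1)\ell - 1)$. The triangle inequality splits
\[
\|x_{m+1} - J_\gamma(x_{m+1})\| \leq \|x_{m+1} - J_{\beta_m}(x_m)\| + \|J_{\beta_m}(x_m) - J_\gamma(x_{m+1})\|,
\]
where the first summand is already covered by $\xi$.

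For the second summand, I would apply the resolvent identity with $a = \beta_m$ and $b = \gamma$ at the point $x_m$, obtaining $J_{\beta_m}(x_m) = J_\gamma(z_m)$ for $z_m := \frac{\gamma}{\beta_m} x_m + \bigl(1 - \frac{\gamma}{\beta_m}\bigr) J_{\beta_m}(x_m)$, and then use the nonexpansiveness of $J_\gamma$ to get $\|J_{\beta_m}(x_m) - J_\gamma(x_{m+1})\| \leq \|z_m - x_{m+1}\|$. Splitting this via $J_{\beta_m}(x_m)$, the direct computation $z_m - J_{\beta_m}(x_m) = \frac{\gamma}{\beta_m}(x_m - J_{\beta_m}(x_m))$ combined with the bound $\|x_m - J_{\beta_m}(x_m)\| \leq 2\|x_m - p\| \leq 2({\cal D}+{\cal E})$ (using Lemma~\ref{seq_bounded} and the fact that $p \in S$ is a fixed point of every resolvent) yields
\[
\|x_{m+1} - J_\gamma(x_{m+1})\| \leq 2\,\|x_{m+1} - J_{\beta_m}(x_m)\| + \frac{2\ell({\cal D}+{\cal E})}{\beta_m}.
\]

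To conclude, Lemma~\ref{asymp_reg0} applied with $m \geq \xi(4k+3)$ gives $\|x_{m+1} - J_{\beta_m}(x_m)\| \leq \frac{1}{4(k+1)}$, while $m \geq {\rm B}(8({\cal D}+{\cal E})(k+1)\ell - 1)$ together with the trivial observation $({\cal D}+{\cal E})(k+1)\ell \geq 1$ gives, via (Q\ref{q3}), $\beta_m \geq 8({\cal D}+{\cal E})(k+1)\ell - 1 \geq 4\ell({\cal D}+{\cal E})(k+1)$, whence $\frac{2\ell({\cal D}+{\cal E})}{\beta_m} \leq \frac{1}{2(k+1)}$. Summing the two contributions delivers $\frac{1}{2(k+1)} + \frac{1}{2(k+1)} = \frac{1}{k+1}$, as required. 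The only delicate point is the orientation in the resolvent identity: with $a = \beta_m$ and $b = \gamma$ the error factor is the vanishing $\gamma/\beta_m$, whereas the reverse choice would produce the blowing-up factor $\beta_m/\gamma$ and render the whole scheme circular.
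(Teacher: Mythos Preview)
Your proof is correct and follows essentially the same approach as the paper: rewrite $J_{\beta_m}(x_m)$ as $J_\gamma(z_m)$ via the resolvent identity, use nonexpansiveness of $J_\gamma$, and control the resulting terms through Lemma~\ref{asymp_reg0} and condition {\rm (Q\ref{q3})}. The only difference is bookkeeping: the paper first expands $x_{m+1}$ via the \eqref{hppa} recursion and splits $z_m-x_{m+1}$ as $\frac{\gamma}{\beta_m}(x_m-x_{m+1})+(1-\frac{\gamma}{\beta_m})(J_{\beta_m}(x_m)-x_{m+1})$, yielding four terms each bounded by $\frac{1}{4(k+1)}$, whereas you keep the conclusion of Lemma~\ref{asymp_reg0} as a black box and split $z_m-x_{m+1}$ through $J_{\beta_m}(x_m)$, yielding just two terms each bounded by $\frac{1}{2(k+1)}$; your handling of the off-by-one in {\rm (Q\ref{q3})} via $8({\cal D}+{\cal E})(k+1)\ell-1\geq 4({\cal D}+{\cal E})(k+1)\ell$ is in fact cleaner than the paper's.
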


\begin{proof}
	First notice that from the definition of $\xi$ and by the monotonicity of the functions ${\rm a}$ and ${\rm E}$, we have
	\begin{equation*}
	\chi_{\ell}(k)-1\geq {\rm a}(4(2{\cal D}+{\cal E})(k+1)-1)\, \text{ and }\, \chi_{\ell}(k)-1\geq {\rm E}(4k+3)+1.
	\end{equation*}
	
	For $n+1\geq \chi_{\ell}(k)$, using Lemma~\ref{resolvent_identity} (resolvent identity), we have
	\begin{align}
	\left\|x_{n+1}-J_{\gamma}(x_{n+1})\right\|&\leq \alpha_n\left\|x_0-J_{\gamma}(x_{n+1})\right\|+\left\|J_{\beta_n}(x_n)-J_{\gamma}(x_{n+1})\right\|+\|e_n\| \nonumber\\
	&\,\leq (2{\cal D}+{\cal E})\alpha_n + \left\|J_{\gamma}\left( \frac{\gamma}{\beta_n}x_n +(1-\frac{\gamma}{\beta_n})J_{\beta_n}(x_n)\right)-J_{\gamma}(x_{n+1})\right\|+\|e_n\| \nonumber\\
	&\leq (2{\cal D}+{\cal E})\alpha_n +\left\|\frac{\gamma}{\beta_n}x_n+(1-\frac{\gamma}{\beta_n})J_{\beta_n}(x_n)-x_{n+1}\right\|+\|e_n\|\nonumber\\
	&\leq (2{\cal D}+{\cal E})\alpha_n+ \frac{\gamma}{\beta_n}\|x_n-x_{n+1}\|+\left|1-\frac{\gamma}{\beta_n}\right|\|J_{\beta_n}(x_n)-x_{n+1}\|+\|e_n\|\nonumber \\
	&\leq (2{\cal D}+{\cal E})\alpha_n + \frac{\gamma}{\beta_n}2({\cal D}+{\cal E})+\left|1-\frac{\gamma}{\beta_n}\right|\|J_{\beta_n}(x_n)-x_{n+1}\|+\|e_n\|\label{asymp_reg_eq}	
	\end{align}
	
	Since $n\geq \chi_{\ell}(k)-1\geq {\rm a}(4(2{\cal D}+{\cal E})(k+1)-1)$, we get
	\begin{equation*}
	(2{\cal D}+{\cal E})\alpha_n \leq \frac{2{\cal D}+{\cal E}}{4(2{\cal D}+{\cal E})(k+1)}=\frac{1}{4(k+1)}.
	\end{equation*}
	Similarly, from $n\geq {\rm E}(4k+3)+1$ and {\rm (Q\ref{q4})}, we have in particular $\|e_n\|\leq \dfrac{1}{4(k+1)}$. Notice that from {\rm (Q\ref{q3})} and $n\geq {\rm B}(8({\cal D}+{\cal E})(k+1)\ell -1)$, we conclude $\dfrac{1}{\beta_n}\leq \dfrac{1}{8({\cal D}+{\cal E})(k+1)\ell}$ and also $\left|1-\dfrac{\gamma}{\beta_n}\right|\leq 1$.\\
	Finally, using \eqref{asymp_reg_eq} and the fact that $n\geq \xi[{\rm a}, {\rm E}, {\cal E}, {\cal D}](4k+3)$, we conclude
	\begin{equation*}
\left\|x_{n+1}-J_{\gamma}(x_{n+1})\right\|\leq\frac{1}{4(k+1)}+ \frac{2({\cal D}+{\cal E})\gamma}{8({\cal D}+{\cal E})(k+1)\ell}+ \frac{1}{4(k+1)}+ \frac{1}{4(k+1)}\leq\frac{1}{k+1},
	\end{equation*}
	which concludes the proof.
\end{proof}

Before, we looked at the projection onto $S$ by referring the fixed-point set of $J_1$. There is no problem in focusing on that particular set since all fixed-point sets of resolvent functions associated with the maximal monotone operator $\textsf{A}$ coincide (with $S$). Bellow we give a quantitative version of the statement that any two resolvent functions have the same fixed points.
\begin{lemma}\label{same_fix_set_ab}
	For all $\alpha$, $\beta\in \R^+$, $k\in \N$ and $x\in X$,
	\begin{equation*}
	\|J_{\alpha}(x)-x\|\leq \frac{1}{\max\{2-\frac{\beta}{\alpha}, \frac{\beta}{\alpha}\}(k+1)}\,\to\, \|J_{\beta}(x)-x\|\leq \frac{1}{k+1}.
	\end{equation*}
\end{lemma}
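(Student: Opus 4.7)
The plan is to leverage the resolvent identity (Lemma~\ref{resolvent_identity}) to express $J_\alpha(x)$ as a value of $J_\beta$ applied at a specific point, and then combine this with nonexpansivity of $J_\beta$ and a triangle inequality. Setting $t := \beta/\alpha$ for brevity, the resolvent identity gives
\[
J_\alpha(x) = J_\beta\bigl(t\,x + (1-t)J_\alpha(x)\bigr).
\]
Denote $y := t\,x + (1-t)J_\alpha(x)$, so that $J_\beta(y) = J_\alpha(x)$. The whole point is that now $\|J_\beta(x)-x\|$ can be split into a piece controlled by nonexpansivity of $J_\beta$ (bounded by $\|x-y\|$) and a piece that is $\|J_\beta(y)-x\| = \|J_\alpha(x)-x\|$.

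Next I would compute $\|x-y\|$ explicitly: since $x-y = (1-t)(x-J_\alpha(x))$, we get $\|x-y\| = |1-t|\,\|J_\alpha(x)-x\|$. Combining with the triangle inequality,
\[
\|J_\beta(x)-x\| \le \|J_\beta(x)-J_\beta(y)\| + \|J_\beta(y)-x\| \le \|x-y\| + \|J_\alpha(x)-x\| = \bigl(|1-t|+1\bigr)\|J_\alpha(x)-x\|.
\]

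The only remaining step is the elementary observation that $|1-t|+1 = \max\{2-t,\,t\}$, verified by a two-case split: if $t\le 1$ then $|1-t|+1 = 2-t \ge t$, and if $t\ge 1$ then $|1-t|+1 = t \ge 2-t$. Plugging this in and using the hypothesis $\|J_\alpha(x)-x\|\le \tfrac{1}{\max\{2-t,t\}(k+1)}$ yields $\|J_\beta(x)-x\|\le \tfrac{1}{k+1}$, as desired. There is no real obstacle here; the main point is simply to recognise that the resolvent identity reduces the problem to a triangle-inequality/nonexpansivity computation, and to check the algebraic identity $|1-t|+1 = \max\{2-t,t\}$ so that the constant appearing in the hypothesis is exactly the one that makes the bound tight.
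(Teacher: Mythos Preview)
Your proof is correct and follows essentially the same route as the paper: both apply the resolvent identity to write $J_\alpha(x)=J_\beta(y)$ with $y=\tfrac{\beta}{\alpha}x+(1-\tfrac{\beta}{\alpha})J_\alpha(x)$, then use the triangle inequality and nonexpansivity of $J_\beta$ to obtain $\|J_\beta(x)-x\|\le(1+|1-\tfrac{\beta}{\alpha}|)\|J_\alpha(x)-x\|$, and finish with the case split giving $1+|1-t|=\max\{2-t,t\}$. The only cosmetic difference is that the paper splits the triangle inequality through $J_\alpha(x)$ and then invokes the identity, whereas you name the intermediate point $y$ first; the computations are identical.
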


\begin{proof}
By the resolvent identity, we have
\begin{align*}
\|J_{\beta}(x)-x\|&\leq \|J_{\beta}(x)-J_{\alpha}(x)\|+\|J_{\alpha}(x)-x\|\\
&\leq \left\|J_{\beta}(x)-J_{\beta}\left(\frac{\beta}{\alpha}x+\left(1-\frac{\beta}{\alpha}\right)J_{\alpha}(x)\right)\right\|+\|J_{\alpha}(x)-x\|\\
&\leq \left\|\left(1-\frac{\beta}{\alpha}\right)(x-J_{\alpha}(x))\right\|+\|J_{\alpha}(x)-x\|=\left(1+\left|1-\frac{\beta}{\alpha}\right|\right)\|J_{\alpha}(x)-x\|. 
\end{align*}

The result now follows from noticing that $(1+|1-\frac{\beta}{\alpha}|)\leq \max\{2-\frac{\beta}{\alpha}, \frac{\beta}{\alpha}\}$. Indeed, we have
\begin{align*}
&\beta\leq \alpha \, \to\, \|J_{\beta}(x)-x\|\leq \left(2-\frac{\beta}{\alpha}\right)\|J_{\alpha}(x)-x\|\\
&\beta > \alpha \, \to \, \|J_{\beta}(x)-x\|\leq \frac{\beta}{\alpha}\|J_{\alpha}(x)-x\|
\end{align*}
\end{proof}

We can particularize this result to the resolvent functions $J_{\beta_n}$ and $J\,(:=J_1)$.
\begin{lemma}\label{same_fix_set}
	Consider ${\rm b}:\N\to\N$ to be a monotone function satisfying
	\begin{equation}\label{upper_pointwise_bound}\tag{Q3'}
	\forall n\in\N \, \left( \beta_n\leq {\rm b}(n)\right).
	\end{equation}
	For all $k$, $n\in \N$ and $x\in X$,
	\begin{equation*}
	\|J(x)-x\|\leq \frac{1}{\delta_{\rm b}(k,n)+1}\, \to \forall i\leq n\, \left(\|J_{\beta_{i}}(x)-x\|\leq \frac{1}{k+1}\right),
	\end{equation*}
	where $\delta_{\rm b}(k,n):=\max\{2, {\rm b}(n)\}(k+1)-1$.
\end{lemma}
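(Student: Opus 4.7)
The statement is a direct quantitative consequence of Lemma~\ref{same_fix_set_ab} applied with the specific choice $\alpha=1$ and $\beta=\beta_i$, combined with a uniform upper bound on $\max\{2-\beta_i,\beta_i\}$ valid for all $i\leq n$. So the plan is in two short steps.

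First, I instantiate Lemma~\ref{same_fix_set_ab} with $\alpha=1$ and $\beta=\beta_i$ for each $i\leq n$. That instance tells us that
\[
\|J(x)-x\|\leq \frac{1}{\max\{2-\beta_i,\beta_i\}(k+1)}\;\to\;\|J_{\beta_i}(x)-x\|\leq \frac{1}{k+1}.
\]
So all that is needed is to dominate the denominator $\max\{2-\beta_i,\beta_i\}(k+1)$ from below by $\delta_{\rm b}(k,n)+1=\max\{2,{\rm b}(n)\}(k+1)$, i.e. to show $\max\{2-\beta_i,\beta_i\}\leq \max\{2,{\rm b}(n)\}$ for every $i\leq n$.

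Second, I verify this bound by a case split on $\beta_i$. Since $\beta_i>0$, we always have $2-\beta_i<2\leq \max\{2,{\rm b}(n)\}$. If $\beta_i\geq 2$, then $\max\{2-\beta_i,\beta_i\}=\beta_i\leq {\rm b}(i)\leq {\rm b}(n)$ using assumption \eqref{upper_pointwise_bound} and the monotonicity of ${\rm b}$; if $\beta_i<2$, then $\max\{2-\beta_i,\beta_i\}<2$. In either case $\max\{2-\beta_i,\beta_i\}\leq \max\{2,{\rm b}(n)\}$, hence
\[
\frac{1}{\max\{2,{\rm b}(n)\}(k+1)}\leq \frac{1}{\max\{2-\beta_i,\beta_i\}(k+1)}.
\]
Combining this inequality with the hypothesis $\|J(x)-x\|\leq \frac{1}{\delta_{\rm b}(k,n)+1}$ feeds the premise of the instance of Lemma~\ref{same_fix_set_ab} obtained in the first step, and the conclusion $\|J_{\beta_i}(x)-x\|\leq\frac{1}{k+1}$ follows for every $i\leq n$.

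There is no real obstacle here; the proof is essentially an unwinding. The only mildly delicate point worth stating explicitly is that $\beta_i>0$ is used to discard the $2-\beta_i$ branch cheaply, and that monotonicity of ${\rm b}$ (rather than just the pointwise bound \eqref{upper_pointwise_bound}) is what allows a single value ${\rm b}(n)$ to dominate all $\beta_i$ with $i\leq n$ uniformly.
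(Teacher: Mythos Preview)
Your proof is correct and follows essentially the same approach as the paper: both apply Lemma~\ref{same_fix_set_ab} with $\alpha=1$, $\beta=\beta_i$ and then use monotonicity of ${\rm b}$ together with $\beta_i>0$ to obtain the uniform bound $\max\{2-\beta_i,\beta_i\}\leq\max\{2,{\rm b}(n)\}$ for all $i\leq n$. You simply spell out the case split that the paper compresses into a single sentence.
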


\begin{proof}
	The result follows from the previous lemma. Consider $n \in \N$. Since {\rm b} is a monotone function, notice that for all $i\leq n$, we have $\beta_i\leq {\rm b}(n)$ which implies $\max\{2, {\rm b}(n)\}\geq \max\{2-\beta_i, \beta_i\}$.
\end{proof}

From Proposition \ref{q_swc} and Lemma \ref{same_fix_set}, we derive the following result.
\begin{proposition}\label{psi+delta}
	Consider monotone functions ${\rm a}, {\rm b}, {\rm B}, {\rm E}:\N\to\N$ satisfying {\rm (Q\ref{q1})}, \eqref{upper_pointwise_bound}, {\rm (Q\ref{q3})} and {\rm (Q\ref{q4})}, respectively. Let ${\cal E}$, ${\cal D}\in \N$ be natural numbers satisfying ${\cal E}\geq 1+ \sum_{i=0}^{{\rm E}(0)}\|e_i\|$ and ${\cal D} \geq \|x_0-p\|$ for some $p\in S$, and write $N:=\max\{2{\cal D}, {\cal D}+{\cal E}\}$. For every $k \in \N$ and any monotone function $f:\N\to\N$, there exists $n\leq \Psi(k,f)$ and $x\in B_{N}$ such that
	\begin{equation*}
	\forall i\in [n,fn]\, \left(\|J_{\beta_i}(x)-x\| \leq \dfrac{1}{f(n)+1}  \,\land \,\langle x_0-x,x_i-x\rangle \leq \frac{1}{k+1}\right),
	\end{equation*}	
	with $\Psi(k,f):=\Psi[{\rm a}, {\rm b}, {\rm B}, {\rm E}, {\cal E}, {\cal D}](k,f):=\psi_{N,\chi_1}(k,\nu_f)$, where $\chi_1=\chi_1[{\rm a}, {\rm B}, {\rm E}, {\cal E}, {\cal D}]$, $\psi_{N,\chi_1}$ as in Proposition~\ref{q_swc} and $\nu_f(m)=\delta_{\rm b}\left( f(m),f(m)\right)$.
\end{proposition}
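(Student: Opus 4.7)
The plan is to obtain the statement by composing Proposition~\ref{q_swc} with Lemma~\ref{same_fix_set}, using the auxiliary function $\nu_f$ as the input function to Proposition~\ref{q_swc}. The key insight is that the quantitative weak compactness result provides an almost-fixed-point for the canonical resolvent $J=J_1$, which Lemma~\ref{same_fix_set} then lifts, uniformly over indices $i\leq f(n)$, to almost-fixed-points for the resolvents $J_{\beta_i}$. The rate $\nu_f$ is designed precisely to pre-compensate for the loss incurred by this lifting.

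First I would verify that the hypotheses of Proposition~\ref{q_swc} are met with the chosen $N$: by definition $N\geq 2{\cal D}\geq 2\|x_0-p\|$, and $N\geq {\cal D}+{\cal E}\geq \|x_0-p\|+1+\sum_{i=0}^{{\rm E}(0)}\|e_i\|$, so $N\geq \max\{2\|x_0-p\|,\,\|x_0-p\|+1+\sum_{i=0}^{{\rm E}(0)}\|e_i\|\}$. Moreover $\chi_1=\chi_1[{\rm a},{\rm B},{\rm E},{\cal E},{\cal D}]$ is a rate of asymptotic regularity of $(x_n)$ relative to $J$ by Proposition~\ref{asymp_reg} (applied with $\gamma=1$ and $\ell=1$), so the sequence $(x_n)$ satisfies the assumption~\eqref{ass_reg} used in the proof of Proposition~\ref{q_swc}. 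One also checks that $\nu_f$ is monotone: since ${\rm b}$ and $f$ are monotone and $\delta_{\rm b}(k,n)=\max\{2,{\rm b}(n)\}(k+1)-1$ is monotone in both arguments, so is $\nu_f(m)=\delta_{\rm b}(f(m),f(m))$.

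Next, applying Proposition~\ref{q_swc} with the monotone function $\nu_f$ in place of $f$, I obtain $n\leq \psi_{N,\chi_1}(k,\nu_f)=\Psi(k,f)$ and $x\in B_N$ such that
\[
\|J(x)-x\|\leq \frac{1}{\nu_f(n)+1}=\frac{1}{\delta_{\rm b}(f(n),f(n))+1}
\quad\text{and}\quad
\forall i\geq n\left(\langle x_0-x,x_i-x\rangle\leq \frac{1}{k+1}\right).
\]
I would then invoke Lemma~\ref{same_fix_set} with the bound $\dfrac{1}{\delta_{\rm b}(f(n),f(n))+1}$ on $\|J(x)-x\|$, obtaining
\[
\forall i\leq f(n)\left(\|J_{\beta_i}(x)-x\|\leq \frac{1}{f(n)+1}\right).
\]
Intersecting the range $i\geq n$ from the first conclusion with $i\leq f(n)$ from the second yields the required conjunction for all $i\in[n,f(n)]$.

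I do not expect any real obstacle: the argument is essentially a two-line composition. The only subtlety is the book-keeping, namely choosing $\nu_f$ so that the almost-fixed-point estimate supplied by Proposition~\ref{q_swc} is tight enough to feed into Lemma~\ref{same_fix_set} and still produce the target rate $\dfrac{1}{f(n)+1}$ for every $\beta_i$ with $i\leq f(n)$; using $\delta_{\rm b}(f(m),f(m))$ rather than, say, $\delta_{\rm b}(f(m),m)$ guarantees precisely that the estimate on $\|J(x)-x\|$ covers the worst-case index $i=f(n)$.
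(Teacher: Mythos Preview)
Your proposal is correct and follows essentially the same approach as the paper: apply Proposition~\ref{q_swc} with the monotone function $\nu_f$ in place of $f$, then use Lemma~\ref{same_fix_set} to transfer the almost-fixed-point estimate for $J$ to the resolvents $J_{\beta_i}$ for all $i\leq f(n)$. The paper's own proof is slightly terser (it does not spell out the verification of the hypotheses on $N$ or the monotonicity of $\nu_f$), but the argument is identical.
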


\begin{proof}
	Let $k\in \N$ and a monotone function $f:\N\to\N$ be given and consider the monotone function $\nu_f$. Notice that by the definition of the function $\delta_{\rm b}$, for all $m\in\N$, $\nu_f(m)\geq f(m)$. By Proposition \ref{q_swc}, we see that there are $n\leq \psi_{N,\chi_1}(k,\nu_f)=:\Psi(k,f)$ and $x\in B_N$ such that
	\begin{equation*}
	\|J(x)-x\|\leq \frac{1}{\nu_f(n)+1} \,\text{ and }\, \forall i\geq n\, \left( \langle x_0-x,x_i-x\rangle \leq \frac{1}{k+1}\right).
	\end{equation*}
	By Lemma~\ref{same_fix_set}, we get that for $i\leq f(n)$, $\|J_{\beta_i}(x)-x\|\leq \frac{1}{f(n)+1}$ and the result follows.
\end{proof}

We are now ready to give the quantitative version of Theorem \ref{original_xu}.
\begin{theorem}\label{main1}
	Let $\textsf{A}$ be a maximal monotone operator and $S$ the set of zeros of $\textsf{A}$ which is assumed to be nonempty. Consider sequences $(\alpha_n) \subset \,]0,1[$, $(\beta_n)\subset \R^+$ and $(e_n)\subset X$. With $x_0\in X$, let $(x_n)$ be the corresponding Halpern type proximal point iteration recursively defined by \eqref{hppa}.\\
	Assume the existence of monotone functions {\rm a}, {\rm A}, {\rm b}, {\rm B}, ${\rm E}:\N \to \N$ such that the conditions {\rm (Q\ref{q1})}, {\rm (Q\ref{q2})}, \eqref{upper_pointwise_bound}, {\rm (Q\ref{q3})} and {\rm (Q\ref{q4})} are satisfied. Let ${\cal E}$, ${\cal D}\in \N$ be natural numbers satisfying ${\cal E}\geq 1+ \sum_{i=0}^{{\rm E}(0)}\|e_i\|$ and ${\cal D} \geq \|x_0-p\|$ for some $p\in S$, and write $N:=\max\{2{\cal D}, {\cal D}+{\cal E}\}$. Then for all $k\in\N$ and any (monotone) function $f:\N\to\N$,
	\begin{equation*}
	\exists n\leq \Phi_1(k,f)\, \forall i,j\in [n,f(n)]\, \left(\|x_i-x_j\|\leq \frac{1}{k+1}\right),
	\end{equation*}
	where $\Phi_1(k,f):=\Phi_1[{\rm a}, {\rm A}, {\rm b}, {\rm B}, {\rm E}, {\cal E}, {\cal D}](k,f)$ with
	\begin{enumerate}[\indent{}]
	\item $\Phi_1[{\rm a}, {\rm A}, {\rm b}, {\rm B}, {\rm E}, {\cal E}, {\cal D}](k,f):=\sigma_1(\tilde{k}, g(\Delta))$,
	\item $\sigma_1:=\sigma_1[{\rm A}, 4N^2]$ is as in Lemma~\ref{final_lem_1},
	\item $\tilde{k}:=4(k+1)^2-1$,
	\item $g(m):=g[{\rm E}, N, k](m):=\max\{ m,{\rm E}\left( 16(1+4N)(k+1)^2-1\right)+1\}$,
	\item $\Delta:=\Psi(32(k+1)^2-1, h_f)$,
	\item $\Psi:=\Psi[{\rm a}, {\rm b}, {\rm B}, {\rm E}, {\cal E}, {\cal D}]$ is as in Proposition~\ref{psi+delta},
	\item $h_f(m):=h[{\rm A}, {\rm E}, N, k, f](m):=(1+4N)\left( 16(k+1)^2\left( f(\sigma_1(\tilde{k}, g(m)))+1 \right) +1 \right)-1$.
	\end{enumerate}
\end{theorem}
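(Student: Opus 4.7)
The plan is to reduce the metastability of $(x_n)$ to an application of Lemma~\ref{final_lem_1} with $s_n := \|x_n - x\|^2$, where $x \in B_N$ is an approximate projection point produced by Proposition~\ref{psi+delta}. Since the triangle inequality gives $\|x_i - x_j\| \leq \|x_i - x\| + \|x_j - x\|$, it suffices to establish $s_m \leq 1/(4(k+1)^2) = 1/(\tilde{k}+1)$ on $[n,f(n)]$; this explains the choice $\tilde{k}=4(k+1)^2-1$. By Lemma~\ref{seq_bounded} together with $x\in B_N$, one has $\|x_n-x\|\leq 2N$, which gives the bound $D = 4N^2$ on $(s_n)$ needed in the invocation of $\sigma_1[{\rm A},D]$.

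To derive the recursive inequality, start from $x_{n+1} - x = \alpha_n(x_0-x) + (1-\alpha_n)(J_{\beta_n}(x_n) + e_n - x)$ and apply the standard Hilbert-space inequality $\|a+b\|^2 \leq \|a\|^2 + 2\langle b, a+b\rangle$ with $a = (1-\alpha_n)(J_{\beta_n}(x_n)+e_n-x)$ and $b = \alpha_n(x_0-x)$, so that $a+b = x_{n+1}-x$ and $(1-\alpha_n)^2\leq 1-\alpha_n$ yields
\[
s_{n+1} \leq (1-\alpha_n)\|J_{\beta_n}(x_n) + e_n - x\|^2 + 2\alpha_n \langle x_0-x, x_{n+1}-x\rangle.
\]
Expanding the first term using nonexpansiveness $\|J_{\beta_n}(x_n)-x\|\leq \|x_n-x\|+\|J_{\beta_n}(x)-x\|$ together with the uniform bound $\|x_n-x\|\leq 2N$, the cross terms split into a part proportional to $\|J_{\beta_n}(x)-x\|$ (with constant $1+4N$) which we absorb into $v_n$, and a part proportional to $\|e_n\|$ (again with constant $1+4N$) which we absorb into $\gamma_n$; we set $r_n := 2\langle x_0-x, x_{n+1}-x\rangle$. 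This gives the required form $s_{n+1}\leq (1-\alpha_n)(s_n+v_n)+\alpha_n r_n+\gamma_n$.

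Now apply Proposition~\ref{psi+delta} to the natural number $32(k+1)^2-1$ and the monotone function $h_f$ to obtain some $n_0\leq \Delta$ and $x\in B_N$ such that on $[n_0, h_f(n_0)]$ both $\|J_{\beta_i}(x)-x\|\leq 1/(h_f(n_0)+1)$ and $\langle x_0-x, x_i-x\rangle\leq 1/(32(k+1)^2)$. The form of $h_f$ is engineered precisely so that, after multiplication by $1+4N$, the first bound yields $v_i \leq 1/(4(\tilde{k}+1)(f(n)+1))$ for $n=\sigma_1(\tilde{k}, g(n_0))$ and $p=f(n)$, matching the hypothesis of Lemma~\ref{final_lem_1}; the second bound yields $r_i\leq 2/(32(k+1)^2) = 1/(4(\tilde{k}+1))$. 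Setting $n^*:=g(n_0)$ (so $n^*\geq {\rm E}(16(1+4N)(k+1)^2-1)+1$), condition \rm(Q\ref{q4}) ensures $\sum_{i=n^*}^{n^*+m}\|e_i\|\leq 1/(16(1+4N)(k+1)^2)$ for all $m$, hence $\sum \gamma_i$ satisfies the tail bound $1/(4(\tilde{k}+1))$ required by Lemma~\ref{final_lem_1}.

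Finally, take $n:=\sigma_1(\tilde{k}, n^*)$; since $n_0\leq \Delta$ and both $\sigma_1$ and $g$ are monotone, $n\leq \sigma_1(\tilde{k}, g(\Delta)) = \Phi_1(k,f)$. By construction of $h_f$ we have $f(n)\leq h_f(n_0)$, so all three hypotheses of Lemma~\ref{final_lem_1} are verified on $[n^*, f(n)]$ with $D=4N^2$, $k'=\tilde{k}$, $n'=n^*$, $p'=f(n)$. The lemma delivers $s_m\leq 1/(\tilde{k}+1)$ for $m\in[n,f(n)]$, and the triangle inequality closes the argument. The main obstacle is the bookkeeping: one must coordinate the bounds so that a \emph{single} choice of $x$ simultaneously controls $v$, $r$ and $\gamma$ on the full interval $[n^*, f(n)]$, which is why the constant $1+4N$ emerging from the recursive inequality propagates into both $g$ and $h_f$ in the statement of $\Phi_1$.
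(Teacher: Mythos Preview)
Your proposal is correct and follows essentially the same route as the paper: apply Proposition~\ref{psi+delta} with parameters $32(k+1)^2-1$ and $h_f$ to obtain an approximate projection point $\tilde x\in B_N$, derive the recursive inequality $s_{m+1}\le(1-\alpha_m)(s_m+v_m)+\alpha_m r_m+\gamma_m$ via the subdifferential inequality, verify the three hypotheses of Lemma~\ref{final_lem_1} on $[g(n_0),f(\sigma_1(\tilde k,g(n_0)))]$, and conclude by triangle inequality. The paper makes two points slightly more explicit than your sketch: it defines $v_m$ and $\gamma_m$ exactly (as the quadratic expressions $\|J_{\beta_m}(\tilde x)-\tilde x\|(\|J_{\beta_m}(\tilde x)-\tilde x\|+2\|x_m-\tilde x\|)$ and $\|e_m\|(\|e_m\|+2\|J_{\beta_m}(x_m)-\tilde x\|)$) via a two-step expansion that yields the constant $1+4N$ cleanly, and it checks the index shift $i\mapsto i+1$ needed because $r_m=2\langle x_0-\tilde x,x_{m+1}-\tilde x\rangle$ requires the inner-product bound at $m+1$, so one needs $f(n)+1\le h_f(n_0)$ rather than merely $f(n)\le h_f(n_0)$; both hold by the form of $h_f$.
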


\begin{proof}
	Under the hypothesis of the theorem, let $k\in \N$ and a monotone function $f$ be given. We divide the proof in a series of claims and our main goal is to apply Lemma~\ref{final_lem_1} to $4(k+1)^2-1$ and with $p=f(\sigma_1(\tilde{k}, g(n)))$, for a certain value of $n$.\\
	
	\textbf{Claim 1:} There are $n\leq \Delta$ and $x\in B_N$ such that for all $i\in [n,f(\sigma_1(\tilde{k}, g(n)))]$
	\begin{equation*}
	\begin{gathered} 
	\left\|J_{\beta_i}(x)-x\right\|\left(\left\|J_{\beta_i}(x)-x\right\| + 2\|x_i-x\|\right) \leq \dfrac{1}{16(k+1)^2(f(\sigma_1(\tilde{k}, g(n)))+1)+1}\\
	\text{and }\, \,2\langle x_0-x,x_{i+1}-x\rangle \leq \frac{1}{16(k+1)^2}.
	\end{gathered}
	\end{equation*}	
	
	\textbf{Proof of Claim 1:} By Proposition~\ref{psi+delta}, there are $n\leq \Psi(32(k+1)^2-1,h_f)=:\Delta$ and $x\in B_N$ such that for all $i\in [n,h_f(n)]$
	\begin{equation}\label{claim_eq1}
	\|J_{\beta_i}(x)-x\|\leq \frac{1}{h_f(n)+1}\, \text{ and }\,\langle x_0-x,x_i-x\rangle \leq \frac{1}{32(k+1)^2}.
	\end{equation}
	
	If $i\in[n,f(\sigma_1(\tilde{k}, g(n)))]$, then $i+1\in[n,f(\sigma_1(\tilde{k}, g(n)))+1]\subset [n,h_f(n)]$. Hence, for $i\in[n,f(\sigma_1(\tilde{k}, g(n)))]$, by \eqref{claim_eq1},
	\begin{equation*}
	2\langle x_0-x, x_{i+1}-x \rangle \leq \frac{1}{16(k+1)^2}.
	\end{equation*}
	
	For $i\in [n,f(\sigma_1(\tilde{k}, g(n)))]$, and noticing that $2N\geq \|x_i-x\|$ and $\|J_{\beta_i}(x)-x\|\leq 1$, we have
	\begin{align*}
	&\|J_{\beta_i}(x)-x\|\left(\|J_{\beta_i}(x)-x\| + 2\|x_i-x\|\right)\leq \|J_{\beta_i}(x)-x\|\left(1 + 4N\right)\leq\\
	&\quad\leq \frac{1+4N}{h_f(n)+1}= \frac{1+4N}{(1+4N)\left(16(k+1)^2(f(\sigma_1(\tilde{k}, g(n)))+1)+1\right)}=\frac{1}{16(k+1)^2(f(\sigma_1(\tilde{k}, g(n)))+1)+1},
	\end{align*}
	which concludes the proof of the claim.\hfill$\blacksquare$\\
	
	Let $n_1\leq \Delta$ and $\tilde{x}\in B_N$ be as in Claim 1. Define the following sequences:
	\begin{enumerate}[\indent{}]
	\item $s_m:=\|x_m-\tilde{x}\|^2$,
	\item $v_m:=\left\|J_{\beta_m}(\tilde{x})-\tilde{x}\right\|\left(\left\|J_{\beta_m}(\tilde{x})-\tilde{x}\right\| + 2\|x_m-\tilde{x}\|\right)$,
	\item $r_m:=2\langle x_0-\tilde{x},x_{m+1}-\tilde{x}\rangle$,
	\item $\gamma_m:=\|e_m\|(\|e_m\|+2\|J_{\beta_m}(x_m)-\tilde{x}\|)$.
	\end{enumerate}
	
	We point out that $4N^2$ is a positive upper bound on the sequence $(s_m)$.\\
	
	\textbf{Claim 2:} Consider the natural number $n_2:=g(n_1)$. Then
	\begin{equation*}
	\forall m\in \N \,\left(\sum_{i=n_2}^{n_2+m}\gamma_i\leq \frac{1}{16(k+1)^2}\right).
	\end{equation*}
	\textbf{Proof of Claim 2:} We recall that $n_2:=\max\{n_1, {\rm E}(16(1+4N)(k+1)^2-1)+1\}$. Notice that for all $i\in\N$, $\|J_{\beta_i}(x_i)-\tilde{x}\|\leq 2N$. Also, by {\rm (Q\ref{q4})}, for $i\geq n_2$ we have $i\geq {\rm E}(16(1+4N)(k+1)^2-1)+1$ and so $\|e_i\|\leq \frac{1}{16(1+4N)(k+1)^2}\leq 1$. Therefore, for all $m\in\N$,
	\begin{align*}
	\sum_{i=n_2}^{n_2+m}\gamma_i&= \sum_{i=n_2}^{n_2+m} \|e_i\|(\|e_i\|+2\|J_{\beta_i}(x_i)-\tilde{x}\|) \leq\\
	&\leq (1+4N)\sum_{i=n_2}^{n_2+m} \|e_i\|\leq \frac{1+4N}{16(1+4N)(k+1)^2}=\frac{1}{16(k+1)^2},
	\end{align*}
	which concludes the proof of Claim 2.\hfill$\blacksquare$\\
	
	\textbf{Claim 3:} For any $m\in \N$,
	\begin{equation*}
	s_{m+1}\leq (1-\alpha_m)\left(s_m+v_m\right) + \alpha_mr_m+ \gamma_m.
	\end{equation*}
	
	\textbf{Proof of Claim 3:} This inequality is obtained by using the subdifferential inequality,
	\begin{equation*}
	\|x+y\|^2\leq \|x\|^2+2\langle y,x+y\rangle,
	\end{equation*}
	in the following way,
	\begin{align*}
	s_{m+1}&=\|x_{m+1}-\tilde{x}\|^2=\|(1-\alpha_m)(J_{\beta_m}(x_m)+e_m-\tilde{x})+\alpha_m(x_0-\tilde{x})\|^2\\
	& \leq (1-\alpha_m)^2\|J_{\beta_m}(x_m)+e_m-\tilde{x}\|^2+ \alpha_m(2\langle x_0-\tilde{x}, x_{m+1}-\tilde{x}\rangle)\\
	& \leq (1-\alpha_m)\left( \|J_{\beta_m}(x_m)-\tilde{x}\|+\|e_m\| \right)^2+\alpha_mr_m\\
	&\leq (1-\alpha_m)\|J_{\beta_m}(x_m)-\tilde{x}\|^2+\alpha_mr_m+\|e_m\|(\|e_m\|+2\|J_{\beta_m}(x_m)-\tilde{x}\|) \\
	&\leq (1-\alpha_m)\left( \|J_{\beta_m}(x_m)-J_{\beta_m}(\tilde{x})\|+\|J_{\beta_m}(\tilde{x})-\tilde{x}\| \right)^2+\alpha_mr_m+\gamma_m\\
	&\leq (1-\alpha_m)\left(\|x_m-\tilde{x}\|^2+ \|J_{\beta_m}(\tilde{x})-\tilde{x}\|\left(\|J_{\beta_m}(\tilde{x})-\tilde{x}\|+2\|x_m-\tilde{x}\|\right)\right) + \alpha_mr_m+\gamma_m\\
	&=(1-\alpha_m)(s_m+v_m)+\alpha_mr_m+\gamma_m.
	\end{align*}
	\hfill$\blacksquare$\\
	
	Since $n_1\leq n_2$, from Claim 1 we also have
	\begin{equation*}
	\forall i\in [n_2, f(\sigma_1(\tilde{k}, n_2))]\,\left(  v_n\leq \dfrac{1}{16(k+1)^2(f(n)+1)+1}  \,\land \,r_n \leq \frac{1}{16(k+1)^2}\right).
	\end{equation*}
	Hence we are in the conditions of Lemma~\ref{final_lem_1} for $k=\tilde{k}$, $n=n_2$ and $p=f(\sigma_1(\tilde{k}, n_2))$. Therefore, with $n_3:=\sigma_1(\tilde{k}, n_2)\leq \sigma_1(\tilde{k}, g(\Delta))=:\Phi_1(k,f)$, we conclude,
	\begin{equation*}
	\forall i\in [n_3, f(n_3)]\, \left( \|x_i-\tilde{x}\|^2\leq \frac{1}{4(k+1)^2}\right).
	\end{equation*}
	This implies $\|x_i-\tilde{x}\|\leq \frac{1}{2(k+1)}$ and for $i, j\in[n_3, f(n_3)]$,
	\begin{equation*}
	\|x_i-x_j\|\leq \|x_i-\tilde{x}\|+\|x_j-\tilde{x}\|\leq \frac{1}{k+1},
	\end{equation*}
	concluding the proof.
\end{proof}

	In a similar way, using Lemma~\ref{final_lem_2} we can conclude a bound on the metastability for $(x_n)$ when the function {\rm A} satisfies instead the condition \eqref{q2'}.
	\begin{theorem}\label{main2}
		Under the hypothesis of Theorem~\ref{main1}, except that, instead of the function ${\rm A}$, we now consider a function ${\rm A'}:\N\times \N \to \N$ satisfying the condition \eqref{q2'}.
		 For all $k\in\N$ and any (monotone) function $f:\N\to\N$,
		\begin{equation*}
		\exists n\leq \Phi_2(k,f)\, \forall i,j\in [n,f(n)]\, \left(\|x_i-x_j\|\leq \frac{1}{k+1}\right),
		\end{equation*}
		where $\Phi_2(k,f):=\Phi_2[{\rm a}, {\rm A'}, {\rm b}, {\rm B}, {\rm E}, {\cal E}, {\cal D}](k,f)$ with
		\begin{enumerate}[\indent{}]
			\item $\Phi_2[{\rm a}, {\rm A'}, {\rm b}, {\rm B}, {\rm E}, {\cal E}, {\cal D}, ](k,f):=\sigma_2(\tilde{k}, g(\Delta))$,
			\item $\sigma_2:=\sigma_2[{\rm A'}, 4N^2]$ is as in Lemma~\ref{final_lem_2},
			\item $\tilde{k}:=4(k+1)^2-1$,
			\item $g(m):=g[{\rm E}, N, k](m):=\max\{ m,{\rm E}\left( 16(1+4N)(k+1)^2-1\right)+1\}$,
			\item $\Delta:=\Psi(32(k+1)^2-1, \bar{h}_f)$,
			\item $\Psi:=\Psi[{\rm a}, {\rm b}, {\rm B}, {\rm E}, {\cal E}, {\cal D}]$ is as in Proposition~\ref{psi+delta},
			\item $\bar{h}_f(m):=h[{\rm A}, {\rm E}, N, k, f](m):=(1+4N)\left( 16(k+1)^2\left( f(\sigma_2(\tilde{k}, g(m)))+1 \right) +1 \right)-1$.
		\end{enumerate}
	\end{theorem}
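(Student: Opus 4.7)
The plan is to follow the structure of the proof of Theorem~\ref{main1} essentially verbatim, replacing every appeal to Lemma~\ref{final_lem_1} (and its auxiliary function $\sigma_1$) with the corresponding appeal to Lemma~\ref{final_lem_2} (and $\sigma_2$). The three intermediate claims in the proof of Theorem~\ref{main1} -- producing the point $\tilde{x}\in B_N$ via Proposition~\ref{psi+delta}, bounding the tail of $\sum\gamma_i$, and verifying the recursive inequality $s_{m+1}\leq (1-\alpha_m)(s_m+v_m)+\alpha_m r_m+\gamma_m$ -- make no use whatsoever of a rate of divergence for $\sum\alpha_n$, so they carry over without change. The only place where the hypothesis on $(\alpha_n)$ is consumed is in the very last step, where Lemma~\ref{final_lem_1} is applied; it is precisely there that we substitute Lemma~\ref{final_lem_2}, since its hypothesis is stated in terms of a function ${\rm A'}$ satisfying \eqref{q2'}.

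More concretely, given $k\in\N$ and a monotone $f$, I would begin by invoking Proposition~\ref{psi+delta} with $k$ replaced by $32(k+1)^2-1$ and with auxiliary monotone function $\bar{h}_f$ (rather than $h_f$, the only difference being that $\sigma_2$ appears in place of $\sigma_1$). This yields $n_1\leq \Delta$ and $\tilde{x}\in B_N$ such that, for all $i\in[n_1,f(\sigma_2(\tilde{k},g(n_1)))]$,
\[
\|J_{\beta_i}(\tilde{x})-\tilde{x}\|\bigl(\|J_{\beta_i}(\tilde{x})-\tilde{x}\|+2\|x_i-\tilde{x}\|\bigr)\leq \frac{1}{16(k+1)^2(f(\sigma_2(\tilde{k},g(n_1)))+1)+1}
\]
and $2\langle x_0-\tilde{x},x_{i+1}-\tilde{x}\rangle\leq \frac{1}{16(k+1)^2}$, exactly as in Claim~1 of Theorem~\ref{main1}. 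I then define the sequences $s_m,v_m,r_m,\gamma_m$ identically, set $n_2:=g(n_1)$ and reproduce Claims~2 and~3 of the proof of Theorem~\ref{main1} without alteration; note that $4N^2$ remains a positive upper bound on $(s_m)$.

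At this point I am in the hypotheses of Lemma~\ref{final_lem_2} with data $k=\tilde{k}$, $n=n_2$, $p=f(\sigma_2(\tilde{k},n_2))$, $D=4N^2$ and the monotone function ${\rm A'}$ satisfying \eqref{q2'}. Applying that lemma yields
\[
\forall i\in[\sigma_2(\tilde{k},n_2),f(\sigma_2(\tilde{k},n_2))]\,\left(\|x_i-\tilde{x}\|^2\leq \frac{1}{4(k+1)^2}\right),
\]
which, by the triangle inequality exactly as at the end of the proof of Theorem~\ref{main1}, gives $\|x_i-x_j\|\leq\frac{1}{k+1}$ for $i,j$ in that interval. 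Monotonicity of $g$ and $\sigma_2$ then ensures that $\sigma_2(\tilde{k},n_2)\leq\sigma_2(\tilde{k},g(\Delta))=\Phi_2(k,f)$, which is the required bound. Since the argument is a direct transcription, I do not anticipate a substantive obstacle; the only point requiring care is the bookkeeping to confirm that the composition $\sigma_2(\tilde{k},g(\cdot))$ used inside $\bar{h}_f$ matches the hypothesis under which Lemma~\ref{final_lem_2} is applied.
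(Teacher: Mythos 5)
Your proposal is correct and matches the paper's own proof, which likewise notes that the argument of Theorem~\ref{main1} carries over verbatim once $\sigma_1$ and $h_f$ are replaced by $\sigma_2$ and $\bar{h}_f$ and the final appeal to Lemma~\ref{final_lem_1} is replaced by Lemma~\ref{final_lem_2}.
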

	
	\begin{proof}
		The proof follows the same steps as before, now using the functions $\sigma_2$ and $\bar{h}_f$, and applying Lemma~\ref{final_lem_2}. Indeed, by Proposition~\ref{psi+delta} we can take $n_1\leq \Delta$ and $\tilde{x}\in B_N$ such that
		\begin{equation*}
		\forall i\in [n_1, f(\sigma_2(\tilde{k}, g(n_1)))]\,\left(  v_n\leq \dfrac{1}{16(k+1)^2(f(n)+1)+1}  \,\land \,r_n \leq \frac{1}{16(k+1)^2}\right),
		\end{equation*}
		where the sequences are defined as before. With $n_2:=g(n_1)$, Claim 2 still holds true, just as Claim 3. Therefore, applying Lemma~\ref{final_lem_2}, with $n_3:=\sigma_2(\tilde{k}, n_2)\leq \Phi_2(k,f)$ we conclude
		\begin{equation*}
		\forall i\in[n_3, f(n_3)]\, \left( \|x_i-\tilde{x}\|\leq \frac{1}{2(k+1)}\right),
		\end{equation*}
		and the result follows again by triangle inequality.
	\end{proof}

	\begin{finalremarks}\hfill\\
		\indent In the previous two theorems, one can drop the apparent restriction to monotone functions $f:\N\to\N$ and conclude the result with the metastability bound $\Phi_i(k,f^{\rm maj})$, for $i\in\{1,2\}$, i.e. by working with the monotone function $f^{\rm maj}$ in place of the possibly non-monotone function $f$.\\
		\indent From the metastable property for the iteration $(x_n)$, we conclude that $(x_n)$ is a Cauchy sequence. Hence it converges and by Lemma~\ref{asymp_reg}, it must converge to a zero of the maximal monotone operator. This proof avoids the use of sequential weak compactness and only requires the weaker form \eqref{proj} of the projection argument. Furthermore, the extracted information is highly uniform in the parameters of the problem, only depending on the exhibited functions {\rm a}, {\rm A} (or {\rm A'}), {\rm b}, {\rm B}, {\rm E} and on the natural numbers ${\cal E}$ and ${\cal D}$. We can allow for $(\alpha_n)\subset [0,1]$: no real restriction on having $\alpha_n >0$ is needed, which is highlighted by the fact that no function witnessing the positivity of $\alpha_n$ was required for the final bounds; since $\alpha_n \to 0$, the equivalence {\rm (C\ref{c2})} -- \eqref{c2'} holds even without the condition $\alpha_n<1$.
	\end{finalremarks}

\section*{Acknowledgments}
I would like to thank Lauren\c{t}iu Leu\c{s}tean, who hosted me in Bucharest in February 2018, where most of the work in this paper was carried out, and for suggesting me to look at the Halpern type proximal point algorithm. The paper also benefited from discussions with Fernando Ferreira, Ulrich Kohlenbach and Bruno Dinis.

The author acknowledges and is thankful for the financial support of: FCT - Funda\c{c}\~ao para a Ci\^{e}ncia e Tecnologia under the project UID/MAT/04561/2019; the research center Centro de Matem\'{a}tica, Aplica\c{c}\~{o}es Fundamentais com Investiga\c{c}\~{a}o Operacional, Universidade de Lisboa; and the `Future Talents' short-term scholarship at Technische Universit{\"a}t Darmstadt.

\bibliographystyle{siam}
\bibliography{References}

\end{document}